\newtheorem{introtheorem}{Theorem}
\newtheorem{introdefinition}[introtheorem]{Definition}
\newtheorem{theorem}{Theorem}[section]
\newtheorem{lemma}[theorem]{Lemma}
\newtheorem{proposition}[theorem]{Proposition}
\newtheorem{corollary}[theorem]{Corollary}
\theoremstyle{definition}
\newtheorem{definition}[theorem]{Definition}
\newtheorem{defprop}[theorem]{Definition/Proposition}
\newtheorem*{claim*}{Claim}
\newtheorem{remark}[theorem]{Remark}
\newtheorem*{remark*}{Remark}
\newtheorem*{remarks*}{Remarks}
\newtheorem*{definition*}{Definition}
\renewcommand{\tilde}{\widetilde}
\newcommand{\isomto}{\overset{\sim}{\rightarrow}}
\newcommand{\C}{\mathbf{C}}
\newcommand{\Q}{\mathbf{Q}}
\newcommand{\Z}{\mathbf{Z}}
\newcommand{\N}{\mathbf{N}}
\newcommand{\G}{\mathbf{G}}
\newcommand{\U}{\mathbf{U}}
\newcommand{\PP}{\bf{P}}
\newcommand{\B}{\mathbf{B}}
\newcommand{\LL}{\bf{L}}
\newcommand{\K}{\mathbf{K}}
\newcommand{\T}{\mathbf{T}}
\newcommand{\HH}{\mathcal{H}}
\begin{document}

%\date{\today\ (version 0.1)}
\title[Hecke algebras for $\mathrm{GL}_n$ over local fields]{Hecke algebras for $\mathrm{GL}_n$ over local fields}
\author[V.\ Karemaker]{Valentijn Karemaker}
\thanks{V.Z.Karemaker@uu.nl. Mathematisch Instituut, Universiteit Utrecht, Postbus 80.010, 3508 TA Utrecht, Nederland}
%\address{\normalfont{Mathematisch Instituut, Universiteit Utrecht, Postbus 80.010, 3508 TA Utrecht, Nederland}}
%\email{\{V.Z.Karemaker\}@uu.nl}

\begin{abstract}
We study the local Hecke algebra $\mathcal{H}_G(K)$ for $G = \mathrm{GL}_n$ and $K$ a non-archimedean local field of characteristic zero. We show that for $G = \mathrm{GL}_2$ and any two such fields $K$ and $L$, there is a Morita equivalence $\mathcal{H}_G(K) \sim_M \mathcal{H}_G(L)$, by using the Bernstein decomposition of the Hecke algebra and determining the intertwining algebras that yield the Bernstein blocks up to Morita equivalence. By contrast, we prove that for $G = \mathrm{GL}_n$, there is an algebra isomorphism $\mathcal{H}_G(K) \cong \mathcal{H}_G(L)$ which is an isometry for the induced $L^1$-norm if and only if there is a field isomorphism $K \cong L$.
\end{abstract}

\maketitle

%\tableofcontents

\section{Introduction}
The central object of study in this paper is the Hecke algebra for $\mathrm{GL}_n$, over a non-archimedean local field of characteristic zero.
 
\begin{introdefinition}\label{localalgebra}
Let $G = \mathrm{GL}_n$, $n \geq 2$, denote the $n$-dimensional general linear algebraic group, and let $K$ be a non-archimedean local field of characteristic zero. The \emph{(local) Hecke algebra} $\HH_G(K) = C_c^{\infty}(G(K),\C)$ of $G$ over $K$ is the algebra of locally constant compactly supported complex-valued functions on $G(K)$, with the convolution product
\begin{equation}\label{conv}
\Phi_1 \ast \Phi_2 : g \mapsto \int_{G(K)} \Phi_1(gh^{-1})\Phi_2(h)d\mu_{G(K)}(h)
\end{equation}
for $\Phi_1$, $\Phi_2 \in \HH_G(K)$.
\end{introdefinition}

The main question addressed in this paper is: to what extent does (the representation theory of) the local Hecke algebra $\mathcal{H}_G(K)$ determine the field $K$?\\

Hecke algebras are complex algebras with a rich arithmetic structure. In particular, (admissible) representations of the Hecke algebra -- or equivalently, modules over the Hecke algebra -- correspond to (admissible) representations of $\mathrm{GL}_n$. By the Langlands correspondence, which was proven (for $\mathrm{GL}_n$ over $p$-adic fields) by Harris and Taylor \cite{HT} and Henniart \cite{HenGLn}, equivalence classes of admissible representations of  $\mathrm{GL}_n$ in turn are in bijection with equivalence classes of $n$-dimensional Frobenius semisimple representation of the Weil-Deligne group $W'_K$ of $K$, see e.g.\ \cite{wed}. Since $W'_K$ is a group extension of the Weil group $W_K$, from which there exists a continuous homomorphism with dense image in the absolute Galois group $G_K$ of $K$, this places our problem in an anabelian context. It is known that $G_K$ does not determine the field $K$ uniquely \cite{yama}; see Section \ref{anab} for more details.\\

 Our first main result is the following.

\begin{introtheorem}\label{introB}
Let $K$ and $L$ be two non-archimedean local fields of characteristic zero and let $G = \mathrm{GL}_2$. %whose residual characteristics are not equal to $2$. 
Then there is always %an abstract $\C$-algebra isomorphism 
a Morita equivalence $\HH_G(K) \sim_M \HH_G(L)$.
\end{introtheorem}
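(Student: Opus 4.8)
The plan is to use the Bernstein decomposition
\[
\HH_G(K) \cong \bigoplus_{\mathfrak{s} \in \mathfrak{B}(G(K))} \HH_G(K)_{\mathfrak{s}},
\]
which reduces the claim to a comparison of individual Bernstein blocks. Since Morita equivalence respects (possibly infinite) direct sums, it suffices to show that the blocks for $G(K)$ and the blocks for $G(L)$ can be matched up in such a way that matched blocks are Morita equivalent. For $\mathrm{GL}_2$ the inertial support $\mathfrak{s} = [M,\sigma]$ is indexed by a Levi subgroup $M$ (either the torus $T \cong (K^\times)^2$ or all of $G$) together with an inertial equivalence class of a cuspidal representation. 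So the first step is to enumerate, for each field, the possible Levi subgroups and the possible inertial classes of supercuspidals; the point will be that the \emph{combinatorial type} of a block depends only on ramification-theoretic invariants that, crucially, are \emph{not} sensitive to the isomorphism type of $K$ beyond what is automatically the same for all non-archimedean local fields of characteristic zero (e.g.\ every such field has residue characteristic $p$ for some prime $p$, but one must check the block structure does not actually see $p$, or more precisely that whatever it sees can always be realised over $L$ as well).

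The second and main step is to identify each Bernstein block up to Morita equivalence with an explicit intertwining (Hecke) algebra of the form $\mathcal{H}(G(K), K_{\mathfrak{s}}, \rho_{\mathfrak{s}})$ attached to a type $(K_{\mathfrak{s}}, \rho_{\mathfrak{s}})$, using the theory of types (Bushnell--Kutzko). For $\mathrm{GL}_2$ the relevant types and their Hecke algebras are completely known: the principal-series blocks give Hecke algebras that are (tensor powers of) affine Hecke algebras of type $A_1$ with a parameter $q$ a power of the residue cardinality, or commutative algebras $\C[X^{\pm 1}]$ (possibly twisted by a finite group action coming from the relative Weyl group), while the supercuspidal blocks give algebras Morita equivalent to $\C[X^{\pm 1}]$ (a single unramified-twist orbit) or to $\C$ (a torsion point). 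The key realisation is that although the Hecke parameter $q = q_K$ differs between $K$ and $L$, an affine Hecke algebra $\mathcal{H}(A_1, q)$ for any $q > 1$ is \emph{Morita equivalent} — indeed its module category depends on $q$ only very weakly — and in fact for type $A_1$ one can write down the Morita equivalence explicitly, or invoke the description of its blocks. So the heart of the argument is: (i) list the finitely many ``shapes'' of $\mathrm{GL}_2$-blocks; (ii) for each shape, give its Hecke algebra; (iii) check the Hecke algebra's Morita equivalence class is independent of the residue field, only depending on the shape; (iv) check each shape occurs for $K$ iff it occurs for $L$, with the same multiplicity (countably infinite in all the relevant cases), so the matching exists.

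The step I expect to be the genuine obstacle is (iii) combined with the twisted/ramified principal series: one must handle the ``$\C[X^{\pm1}] \rtimes (\Z/2)$''-type blocks and, more delicately, the blocks coming from ramified quadratic characters, where the number of such characters and the precise structure of the relevant Hecke algebra (a rank-one affine Hecke algebra with possibly \emph{unequal} parameters, governed by the conductor and by whether $p = 2$) could a priori depend on arithmetic of $K$ that is not shared by $L$. Concretely, when $p = 2$ there are finitely many ramified quadratic characters and they behave differently than for $p$ odd, so one must verify that $p = 2$ for $K$ iff $p = 2$ for $L$ is \emph{not} needed — i.e.\ that even the $p=2$ blocks are Morita equivalent to the ``generic'' ones — or else localise the discussion so that this genuinely two-valued invariant ($p=2$ vs.\ $p$ odd) gets washed out by Morita equivalence. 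I would deal with this by computing the two-sided cell / block structure of each such rank-one (possibly unequal-parameter) affine Hecke algebra and observing that in every case it is Morita equivalent either to $\C$ or to $\C[X^{\pm1}]$, so the potentially field-dependent data disappears at the level of module categories. Once (i)--(iv) are in place, assembling the block-by-block Morita equivalences into a single Morita equivalence $\HH_G(K) \sim_M \HH_G(L)$ is formal.
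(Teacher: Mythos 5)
Your strategy coincides with the paper's: Bernstein decomposition, identification of each block up to Morita equivalence with a Bushnell--Kutzko intertwining algebra, the observation that for $\mathrm{GL}_2$ these intertwining algebras are tensor products of affine Hecke algebras $\mathcal{H}(n_i,q^{k_i})$ with $n_i\leq 2$, and a count showing each shape occurs (countably) infinitely often for every $K$. The key point you isolate in step (iii) --- that the Morita class of the rank-one affine Hecke algebra does not see the parameter $q$ --- is exactly the crux of the paper's argument. But the way you propose to establish it would fail. You assert that every rank-one affine Hecke algebra is ``Morita equivalent either to $\C$ or to $\C[X^{\pm1}]$''. This is false: the block attached to $[\T,\chi\otimes\chi]_{\G}$ has intertwining algebra $\mathcal{H}(2,q^{k})$, whose centre (for any parameter) is the invariant ring $\C[X^{\pm1},Y^{\pm1}]^{S_2}$, of Krull dimension $2$; since Morita equivalence preserves centres, this algebra is Morita equivalent to neither $\C$ nor $\C[X^{\pm1}]$. (Equivalently, it has a two-parameter family of pairwise non-isomorphic simple modules.) If your claim were true, the whole Hecke algebra would be commutative up to Morita equivalence, which it is not. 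The correct resolution --- and the paper's key lemma --- is that $\mathcal{H}(2,r)$ is actually \emph{isomorphic as an algebra} to the group algebra $\C[\tilde{W}_2]\cong\C[S,T,T^{-1}]/\langle S^2-1,\,T^2S-ST^2\rangle$ for every $r\neq -1$, via $S_1\mapsto \frac{r+1}{2}s_1+\frac{r-1}{2}$, $T\mapsto\Pi$; this is what makes the $q$-dependence disappear while keeping the (noncommutative) block intact.

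Two further inaccuracies, less damaging but worth correcting. First, no supercuspidal block of $\mathrm{GL}_2(K)$ is Morita equivalent to $\C$: the centre of $\mathrm{GL}_2(K)$ is non-compact, so every supercuspidal has a free rank-one lattice of unramified twists and its block is $\C[T,T^{-1}]$; the ``$\C$ at a torsion point'' phenomenon belongs to groups with compact centre such as $\mathrm{SL}_2$ or $\mathrm{PGL}_2$, and admitting it as a possible shape would force you to match a shape that never occurs. Second, the unequal-parameter and $p=2$ worries in your final paragraph are vacuous for $\mathrm{GL}_n$: the Main Theorem of Bushnell--Kutzko on semisimple types always produces \emph{equal-parameter} type-$A$ affine Hecke algebras $\otimes_i\mathcal{H}(n_i,q^{k_i})$ with $q^{k_i}>1$, so ramified quadratic characters only ever contribute the commutative block $\C[X^{\pm1},Y^{\pm1}]$ (their inertial class on the torus has trivial relative Weyl group), and no genuinely two-valued arithmetic invariant enters. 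With these corrections the block-by-block matching goes through as you describe.
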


The Morita equivalence implies that the respective categories of modules over the Hecke algebras of $K$ and $L$ are isomorphic. Equivalently, we find that the representation categories of $\mathcal{H}_G(K)$ and $\mathcal{H}_G(L)$, hence of $\mathrm{GL}_2(K)$ and $\mathrm{GL}_2(L)$, are isomorphic. That is, the module structure of the complex representations of $\mathrm{GL}_2$ over a local field as above does not depend on the local field.

To prove the theorem, we make use of the decomposition of the Hecke algebra into Bernstein blocks. The structure of the blocks is determined up to Morita equivalence, using the representation theory of $p$-adic reductive groups. The preliminaries are collected in Section 2, after which Theorem \ref{introB} is proven in Section 3.

By contrast, returning to $G = \mathrm{GL}_n$ and imposing an analytic condition, we obtain the following theorem, whose proof takes up Section 4.

\begin{introtheorem}\label{introA}
Let $K$ and $L$ be two non-archimedean local fields of characteristic zero and let $G = \mathrm{GL}_n$. %whose residual characteristics are not equal to $2$. 
Then there is an $L^1$-isomorphism of local Hecke algebras $\HH_G(K) \cong \HH_G(L)$ if and only if there is a field isomorphism $K \cong L$.
\end{introtheorem}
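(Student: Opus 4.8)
The plan is to prove the two implications of the equivalence separately, the content lying in the forward direction, which I would factor through two classical rigidity theorems. The converse (``if'') direction is routine: a field isomorphism $\sigma\colon K\to L$ is automatically a homeomorphism for the valuation topologies, hence induces a topological group isomorphism $\GL_n(K)\to\GL_n(L)$ carrying $\GL_n(\cO_K)$ onto $\GL_n(\cO_L)$; pushing a Haar measure forward along $\sigma$ and composing with $\Phi\mapsto\Phi\circ\sigma^{-1}$ gives an algebra isomorphism $\HH_G(K)\to\HH_G(L)$, which preserves the $L^1$-norm once the Haar measures are normalized compatibly (say $\mu(\GL_n(\cO))=1$ on both sides).

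For the forward (``only if'') direction, I would first pass to $L^1$-completions. The Hecke algebra $\HH_G(K)=C_c^{\infty}(\GL_n(K))$ is $L^1$-dense in the convolution Banach algebra $L^1(\GL_n(K))$ --- locally constant compactly supported functions are sup-norm dense on compacta by Stone--Weierstrass, hence $L^1$-dense, since $\GL_n(K)$ is totally disconnected --- and convolution is $L^1$-continuous. So any $L^1$-isometric algebra isomorphism $\HH_G(K)\cong\HH_G(L)$ extends uniquely to an isometric isomorphism of Banach algebras $L^1(\GL_n(K))\cong L^1(\GL_n(L))$. Now I would invoke Wendel's theorem: an isometric algebra isomorphism between the $L^1$-group algebras of locally compact groups is implemented by a topological isomorphism of the underlying groups. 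The mechanism is that the point masses $\delta_g$ are characterized intrinsically inside the multiplier algebra $M(G)$ of $L^1(G)$ --- for instance as the norm-one invertible elements whose inverses also have norm one --- so that an isometric algebra isomorphism necessarily carries this distinguished set bijectively onto its counterpart; both groups here are unimodular, so no modular-function correction intervenes. We conclude that $\GL_n(K)\cong\GL_n(L)$ as topological, in particular abstract, groups.

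It remains to descend from an abstract group isomorphism $\GL_n(K)\cong\GL_n(L)$ to a field isomorphism $K\cong L$, which I would do by quoting the classical theory of isomorphisms of general linear groups over fields (Schreier--van der Waerden for $\PSL_n$, in greater generality Dieudonn\'e, Hua, Rickart, O'Meara, and others): for $n\ge2$ and $K,L$ \emph{infinite}, such an isomorphism preserves the center (the scalar matrices), descends to $\PGL_n(K)\cong\PGL_n(L)$, and is ultimately of the form $g\mapsto\chi(g)\cdot{}^{\sigma}\!g^{\pm1}$ for a field isomorphism $\sigma\colon K\to L$, a radial homomorphism $\chi$ into the center, and possibly the transpose-inverse automorphism. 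Characteristic-zero non-archimedean local fields are infinite, so this applies and yields $K\cong L$. (As an alternative to quoting this theorem wholesale, one can recover $K$ directly --- and topologically --- from $\GL_n(K)$ via a root subgroup, which is topologically isomorphic to $(K,+)$, together with the conjugation action of a maximal split torus, which reinstates the multiplication on $K$.)

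I expect the main obstacle to be the first rigidity input rather than the last: one must check carefully that the hypothesis supplies an $L^1$-\emph{isometric} algebra isomorphism (not merely a bounded or a $*$-preserving one), that it genuinely extends to the completions with the convolution structure intact, and that the multiplier-algebra description of point masses goes through for these totally disconnected, non-compact, unimodular groups --- only then may Wendel's theorem be applied verbatim. The algebraic descent is standard once the groups are known to be abstractly isomorphic, and the ``if'' direction is immediate.
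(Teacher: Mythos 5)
Your proposal is correct and follows essentially the same route as the paper: reduce by Stone--Weierstrass/density to an isometric isomorphism $L^1(\GL_n(K))\cong L^1(\GL_n(L))$, invoke Wendel's (and Kawada's) theorem to obtain a topological group isomorphism $\GL_n(K)\cong\GL_n(L)$, and conclude $K\cong L$ from the classical theory of isomorphisms of linear groups (the paper cites O'Meara, Theorem 5.6.10). The extra detail you supply on the multiplier-algebra characterization of point masses and on the explicit form of $\GL_n$-isomorphisms goes beyond what the paper records but does not change the argument.
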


Here, an $L^1$-isomorphism of Hecke algebras is an isomorphism which respects the $L^1$-norm that is induced by the Haar measure. The proof first uses the Stone-Weierstrass theorem and density results to reduce to the case of an isomorphism between the group algebras $L^1(G(K)) \cong L^1(G(L))$, and then results due to Wendel \cite{MR0049910} and Kawada \cite{Kawada} to reduce to a group isomorphism $G(K) \cong G(L)$, which implies that $K \cong L$ by classical results on general linear groups. In an earlier paper \cite{CK}, we discussed the adelic analogue of this question. In particular, Theorem \ref{introA} and its proof can be compared with Corollary 6.4 of \cite{CK}. Section \ref{global} discusses a global version of Theorem 2.

Finally, in Section 5, we discuss some open problems.

%Comment on the case $p=2$, mention primitive representations.
%\begin{introtheorem}
%Let $K$ and $L$ be two non-archimedean local fields of residual characteristic $2$. If (a finite list of arithmetic conditions) is satisfied, then there always exists an abstract algebra isomorphism $\HH_G(K) \cong \HH_G(L)$.
%\end{introtheorem}

\section*{Acknowledgements}
I would like to thank Maarten Solleveld for many enlightening discussions, pointing out helpful references, and proofreading of this paper. I am also grateful to Gunther Cornelissen for suggesting this problem, helpful conversations and careful proofreading.

%\newpage

\section{Preliminaries}

In this section, we collect the results from representation theory and on Bernstein decomposition which we need to prove Theorem \ref{introB}.

\subsection*{Representation theory of $\mathrm{GL}_n$}\label{rep}

We will write $\G = G(K)$ from now on, and study and classify representations $\pi \colon \G \to V$ where $V$ is a (possibly infinite-dimensional) complex vector space. More details can be found in e.g.\ \cite{BZ}, \cite{blondel}.

\begin{definition}\label{adm}
The representation $\pi \colon \G \to V$ is called \emph{admissible} if it satisfies the following two conditions:
\begin{enumerate}
\item the stabiliser $\mathrm{Stab}_{\G}(v)$ of any $v \in V$ is an open subgroup of $\G$,
\item for any open subgroup $\G' \subset G(\mathcal{O}_K)$, the space $\{v \in V \colon \pi(g')v = v \textrm{ for all } g' \in \G' \}$ is finite-dimensional.
\end{enumerate}
\end{definition}

\begin{remark} A representation $\pi$ as in Definition \ref{adm} is called \emph{smooth} if it satisfies only the first condition. Clearly, every admissible representations is smooth. Proposition 2 of \cite{blondel} (due to M.-F. Vign{\'e}ras) shows that any smooth irreducible complex representation is admissible. Hence, ``smooth irreducible" and ``admissible irreducible" will be used interchangeably.
\end{remark}

\begin{definition}\label{admhecke}
A representation $\pi' \colon \mathcal{H}_G(K) \to V$ is called \emph{admissible} if it satisfies the following two conditions:
\begin{enumerate}
\item for every $v \in V$, there is an element $f \in \mathcal{H}_G(K)$ such that $\pi(f)v = v$,
\item for every $f \in \mathcal{H}_G(K)$, we have $\mathrm{dim}(\pi(f)V) < \infty$.
%\item for every elementary idempotent $\xi$ of $\mathcal{H}_G(K)$ (i.e. an element $\xi = \sum_{i=1}^{r} \xi_i$ where $\xi(g) = \mathrm{dim}(\pi_i)\mathrm{tr}\pi_i(g^{-1})$ for $\pi_i$ inequivalent finite-dimensional irreducible representations of $G(\mathcal{O}_K)$), the operator $\pi(\xi)$ has finite-dimensional range.
\end{enumerate}
\end{definition}

Smooth representations of $\G$ correspond to representations $\pi'$ for which $V$ is a non-degenerate $\mathcal{H}_G(K)$-module \cite{BDKV}. Analogously, admissible representations of $\G$ correspond to admissible representations of $\mathcal{H}_G(K)$ and vice-versa, see e.g.\ $(2.1.13)$ of \cite{wed}.  

\begin{definition}\label{chars}
A \emph{quasicharacter} $\chi$ of $K^{\times}$ is a continuous homomorphism $\chi \colon K^{\times} \to \C^*$. It is called \emph{unramified} if it is trivial on $\mathcal{O}^{\times}_K$. Any unramified quasicharacter is of the form $\vert \cdot \vert^{z}$ for some value of $z$. %Cite Lang's algebraic number theory XIV, par. 2, p. 278 Proposition 1
\end{definition}

\begin{lemma}[cf.\ (2.1.18) of \cite{wed}] %or Ex.3.6 of Prasad
Every irreducible admissible representation $\pi$ which is finite-dimensional is in fact one-dimensional and there exists a quasicharacter $\chi$ such that $\pi(g) = \chi(\det g)$ for all $g \in \G$.
\end{lemma}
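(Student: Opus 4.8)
The plan is to reduce the assertion to the (essentially trivial) representation theory of the abelian group $K^{\times}$, by showing that $\pi$ must be trivial on $\SL_n(K)$. Since an admissible representation is smooth and $V$ is finite-dimensional, fixing a basis $v_1,\dots,v_m$ of $V$ exhibits $\ker\pi=\bigcap_{i=1}^{m}\mathrm{Stab}_{\G}(v_i)$ as a finite intersection of open subgroups of $\G$, hence open; it is moreover normal, being a kernel. In particular $\ker\pi$ contains the principal congruence subgroup $I+\p^{N}M_n(\cO_K)$ for some $N\ge 1$, where $\p$ is the maximal ideal of $\cO_K$.

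The key step is then to check that the normal closure of $I+\p^{N}M_n(\cO_K)$ in $\G$ already contains $\SL_n(K)$. For $i\ne j$ and $x\in\p^{N}$ the elementary matrix $U_{ij}(x)=I+xE_{ij}$ lies in $I+\p^{N}M_n(\cO_K)$, and conjugation by a diagonal matrix $\mathrm{diag}(t_1,\dots,t_n)\in\mathrm{GL}_n(K)$ sends it to $U_{ij}\big((t_i t_j^{-1})x\big)$. Choosing $t_i=\varpi^{-r}$, $t_j=1$ and the remaining entries equal to $1$ (with $\varpi$ a uniformizer) shows $U_{ij}(\p^{N-r})\subseteq\ker\pi$ for all $r\ge 0$, hence $U_{ij}(K)\subseteq\ker\pi$; since the subgroups $U_{ij}(K)$ generate $\SL_n(K)$ over the field $K$, we conclude $\SL_n(K)\subseteq\ker\pi$. (Equivalently, one may invoke the classical simplicity of $\mathrm{PSL}_n(K)$ together with the observation that an open normal subgroup of $\SL_n(K)$ is infinite, hence noncentral.) Therefore $\pi$ factors through $\G/\SL_n(K)\isomto K^{\times}$ via the determinant, giving $\pi=\chi\na\det$ for a finite-dimensional representation $\chi$ of $K^{\times}$, which is still irreducible because $\det$ is surjective.

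Finally, $K^{\times}$ is abelian and $\C$ is algebraically closed, so Schur's lemma forces each $\chi(t)$ to act as a scalar; hence every subspace of $V$ is $\chi$-invariant and irreducibility yields $\dim V=1$. Thus $\chi\colon K^{\times}\to\C^{\times}$ is a group homomorphism with $\pi(g)=\chi(\det g)$, and since $\ker\pi$ is open and $\det$ is an open map, $\chi$ is trivial on an open subgroup of $K^{\times}$ (for instance on $1+\p^{N}$), hence locally constant, i.e.\ a quasicharacter. The one step that is not a routine verification is the normal-closure computation above — the passage from ``$\pi$ annihilates a small congruence subgroup'' to ``$\pi$ annihilates all of $\SL_n(K)$''; everything afterwards is formal.
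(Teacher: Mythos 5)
Your proof is correct and complete. The paper does not prove this lemma at all (it simply cites (2.1.18) of Wedhorn), and your argument --- the kernel is open and normal, the normal closure of a principal congruence subgroup contains $\SL_n(K)$ by conjugating root subgroups with diagonal matrices, and then Schur's lemma on the abelian quotient $K^{\times}$ --- is exactly the standard proof given in that reference.
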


Now we turn our attention to the infinite-dimensional representations.

\begin{defprop}\label{L}
A \emph{parabolic subgroup} $\PP$ of $\G$ is such that $\G/\PP$ is complete. Equivalently, $\PP$ contains a Borel subgroup $\B$. Parabolic subgroups are the normalisers of their unipotent radicals, and every $\PP$ is the semidirect product of this unipotent radical and a $K$-closed reductive group $\LL$. This $\LL$ is called the \emph{Levi subgroup} of $\PP$.
\end{defprop}

\begin{remark}
The proper parabolic subgroups of $\mathrm{GL}_n(K)$ are the block upper triangular matrices and their conjugates. For instance, when $n=2$, these are precisely the Borel subgroups, which are $\T \ltimes \U$ with $\T$ a maximal torus and $\U$ a maximal unipotent subgroup. That is, all the Levi subgroups in $\mathrm{GL}_2(K)$ are the maximal tori, i.e., the diagonal $2 \times 2$ matrices. 
%(Note that all maximal tori are split over $\Q$.)
\end{remark}

\begin{definition}\label{parind}
Let $\tau$ be a smooth representation of a Levi subgroup $\LL$ of a parabolic subgroup $\PP$ of $\G$. After inflation, we may assume that $\tau$ is a representation of $\PP$. The \emph{parabolic induction} $\mathrm{ind}_{\PP}^{\G}(\tau)$, also denoted $\rho(\tau)$, is the space of locally constant functions $\phi$ on $\G$ which satisfy 
\[
\phi(pg) = \delta_P(p)^{\frac{1}{2}}\tau(p)\phi(g)
\]
for all $g \in \G$ and $p \in \PP$. The normalising factor $\delta_P = \Delta_P^{-1}$ is the inverse of the \emph{modular character} $\Delta_P$ which satisfies $\Delta_P(\mathrm{diag}(a_1,\ldots,a_n)) = \vert a_1 \vert^{1-n}\vert a_2 \vert^{3-n}\ldots \vert a_n \vert^{n-1}$, cf.\ \cite{Prasad}, Ex. 2.6 . 
Parabolic induction preserves smoothness and admissibility but not necessarily irreducibility. %cf. Lemma \ref{irr}.
%\item there exists an open subgroup $\G'$ of $G(\mathcal{O}_K)$ such that $\phi(gg') = \phi(g)$ for all $g' \in \G'$.
\end{definition}

\begin{definition}\label{cuspidal}
An infinite-dimensional irreducible admissible representation $\pi \colon \G \to V$ is called \emph{(absolutely) cuspidal} or \emph{supercuspidal} if it is not a subquotient of a representation that is parabolically induced from a \emph{proper} parabolic subgroup of $\G$.
\end{definition}

\begin{definition}\label{segment}
Using the notation of \cite{BZ}, a \emph{partition} $(n_1,\ldots,n_r)$ of $n$ means a partition of $\{1, 2, \ldots, n\}$ into segments $(1,\ldots,n_1),(n_1+1,\ldots,n_1+n_2), \ldots, (n_1+n_2+\ldots+n_{r-1}+1, \ldots, n)$ of respective lengths $n_i$. We will write $(n_1,\ldots,n_r) \perp n$ for such a partition. 

For any $n_i$ appearing in a partition of $n$, write $\Delta_i = \{ \sigma_i, \sigma_i\vert\cdot\vert,\ldots,\sigma_i\vert\cdot\vert^{n_i-1} \}$ for $i = 1, \ldots ,r$ and $\sigma_i$ an irreducible supercuspidal representation of $\mathrm{GL}_{n_i}(K)$. The $\Delta_i$ are also called segments, and we say that $\Delta_i$ \emph{precedes} $\Delta_j$ if $\Delta_i \not\subset \Delta_j$ and $\Delta_j \not\subset \Delta_i$, if $\Delta_i \cup \Delta_j$ is also a segment, and $\sigma_i = \sigma_j \vert\cdot\vert^k$ for some $k > 0$. 
\end{definition}

Now compare Definition \ref{cuspidal} with the following result (cf.\ Theorem 6.1 of \cite{zel}, Corollary 3.27 of \cite{BZ} and pp.\ 189-190 of \cite{Prasad}).

%\begin{lemma}[Cor. 3.27 of \cite{BZ}]\label{irr} %f.i. Theorem 6.5 of \cite{Prasad}, Corollary 3.27 of \cite{BZ}
%%Any irreducible admissible representation of $\G$ is a subrepresentation of the parabolic induction of a supercuspidal representation from a Levi subgroup $\LL$ of $\G$.
%Any irreducible admissible representation of $\G$ is a subrepresentation of a parabolically induced representation $\mathrm{ind}_{\PP}^{\G}(\sigma_1 \ldots \otimes \sigma_r)$, where the $\sigma_i$ are irreducible supercuspidal representations of $\mathrm{GL}_{n_i}$ and $(n_1,\ldots,n_r)$ is a partition of $n$.
%% Separately define principal series as parabolically induced characters?
%\end{lemma}

\begin{lemma}\label{lang}
For any partition $(n_1,\ldots,n_r)$ of $n$ and a choice of segments so that $\Delta_i$ and $\Delta_{i+1}$ ($i=1,\ldots, r$) do not precede each other, there exists a corresponding induced representation, denoted $\mathrm{ind}_{\PP}^{\G}(\sigma_1 \ldots \otimes \sigma_r)$, whose unique irreducible quotient is an irreducible admissible representation of $\G$. Any irreducible admissible representation of $\G$ is equivalent to such a quotient representation. \end{lemma}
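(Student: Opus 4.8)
## Proof proposal for Lemma \ref{lang} (the Zelevinsky classification)

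The plan is to follow Zelevinsky's construction of the Langlands quotient as carried out in \cite{BZ} and \cite{zel}, reducing the general statement to the two ingredients: (i) supercuspidal support and (ii) the combinatorics of segments. First I would recall that by the geometric lemma and the theory of the Bernstein--Zelevinsky derivatives, every irreducible admissible representation $\pi$ of $\G = \mathrm{GL}_n(K)$ has a well-defined supercuspidal support: there is a partition $(m_1,\dots,m_s) \perp n$ and irreducible supercuspidal representations $\rho_j$ of $\mathrm{GL}_{m_j}(K)$, unique up to permutation, such that $\pi$ embeds in $\mathrm{ind}_{\PP}^{\G}(\rho_1 \otimes \cdots \otimes \rho_s)$. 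Grouping these $\rho_j$ into maximal chains under the ``linkage'' relation $\rho' \cong \rho \,|\cdot|$ produces the segments $\Delta_i$, and one reorders the inducing data so that for consecutive indices $\Delta_i$ does not precede $\Delta_{i+1}$; the key combinatorial fact (Zelevinsky) is that such an ordering always exists for any multiset of segments.

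Next, for a single segment $\Delta = \{\sigma,\sigma|\cdot|,\dots,\sigma|\cdot|^{k-1}\}$ with $\sigma$ supercuspidal on $\mathrm{GL}_d(K)$, I would invoke the fundamental result of Bernstein--Zelevinsky that the induced representation $\mathrm{ind}_{\PP}^{\G}(\sigma \otimes \sigma|\cdot| \otimes \cdots \otimes \sigma|\cdot|^{k-1})$ has a \emph{unique} irreducible quotient $Q(\Delta)$ (equivalently a unique irreducible submodule, after the obvious dual reindexing), which is essentially square-integrable; this is the ``generalized Steinberg'' building block. Then, given segments $\Delta_1,\dots,\Delta_r$ arranged so that $\Delta_i$ does not precede $\Delta_{i+1}$, I would form the induced representation $\mathrm{ind}_{\PP}^{\G}(Q(\Delta_1) \otimes \cdots \otimes Q(\Delta_r))$ from the appropriate parabolic $\PP$, and appeal to Theorem 6.1 of \cite{zel} (cf.\ Corollary 3.27 of \cite{BZ}): under the non-precedence hypothesis this representation has a unique irreducible quotient, the \emph{Langlands quotient}, and distinct multisets $\{\Delta_1,\dots,\Delta_r\}$ give inequivalent quotients. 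Conversely, starting from an arbitrary irreducible $\pi$ with supercuspidal support as above, the segments extracted in the first step produce, after a suitable permutation into non-precedence order, exactly such a datum, and by comparing supercuspidal supports (a full invariant of the block in which $\pi$ lives) one sees $\pi$ is the corresponding Langlands quotient. Writing $\mathrm{ind}_{\PP}^{\G}(\sigma_1 \otimes \cdots \otimes \sigma_r)$ for the composite of the two induction stages (first building each $Q(\Delta_i)$, then inducing the tensor product) then yields the statement as phrased.

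The main obstacle — really the heart of the matter — is the uniqueness of the irreducible quotient together with the injectivity of the classification, i.e.\ that non-equivalent data yield non-equivalent quotients. This rests on Zelevinsky's detailed analysis of the module structure of these parabolically induced representations via the derivative functors and the combinatorics of the linkage relation on segments, and I would not reprove it here: I would cite Theorem 6.1 of \cite{zel} and Corollary 3.27 of \cite{BZ} directly, as the lemma statement already signals. The existence of a non-precedence ordering of any finite multiset of segments is the one purely combinatorial point I would verify by hand (greedily remove a segment that is preceded by no other and put it last, then induct), and the translation between ``unique irreducible quotient'' and ``unique irreducible submodule'' is handled by the standard duality $\pi \mapsto \pi^\vee$ together with the fact that the segment $\Delta$ and its reversal correspond under this duality.
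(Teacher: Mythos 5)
The paper does not actually prove this lemma: it is stated purely as a quotation of Theorem 6.1 of \cite{zel}, Corollary 3.27 of \cite{BZ} and pp.~189--190 of \cite{Prasad}, so there is no in-paper argument to compare yours against. Your outline of the Zelevinsky--Langlands construction --- supercuspidal support, the essentially square-integrable representations $Q(\Delta)$ attached to single segments, induction in stages, and the unique irreducible quotient under the non-precedence condition --- matches the content of those references, and deferring the uniqueness and injectivity statements to them is consistent with what the paper itself does.

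However, your argument for the exhaustion statement (that \emph{every} irreducible admissible representation arises as such a quotient) has a genuine gap. You propose to extract the segments of $\pi$ by grouping its supercuspidal support into maximal chains under $\rho \mapsto \rho|\cdot|$ and then to identify $\pi$ with the resulting Langlands quotient ``by comparing supercuspidal supports (a full invariant of the block in which $\pi$ lives).'' The supercuspidal support determines the Bernstein block, but it does not determine the multiset of segments, hence not the representation: already for $\mathrm{GL}_2$ the trivial representation and the Steinberg representation have the same cuspidal support $\{|\cdot|^{-1/2},|\cdot|^{1/2}\}$, yet they correspond to the single segment of length two and to two singleton segments, respectively. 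Grouping into maximal chains would never produce the Steinberg datum, and comparing cuspidal supports cannot decide which irreducible constituent of the induced representation $\pi$ actually is. The surjectivity of the segment classification is part of the hard content of Theorem 6.1 of \cite{zel}, proved there via the derivative functors and an induction on the partial order on multisets of segments with fixed cuspidal support; it must be cited as such (or reproved along those lines), not deduced from the cuspidal support alone.
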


Hence, supercuspidal representations can be viewed as the building blocks of admissible representations of $\G$. 
This concludes the classification of admissible representations of $\G$.

\begin{remark}\label{classificationGl2}
Let now $n=2$, so that $G = \mathrm{GL}_2$ and $\G = \mathrm{GL}_2(K)$. Every infinite-dimensional irreducible admissible representation $\pi$ which is not supercuspidal is then contained in $\rho(\mu_1,\mu_2)$ for some quasicharacters $\mu_1$, $\mu_2$ of $K$.
If $\mu_1\mu_2^{-1} \neq |\cdot |^{\pm 1}$ then $\rho(\mu_1,\mu_2)$ and $\rho(\mu_2,\mu_1)$ are equivalent and irreducible. We call a representation of this kind a \emph{(non-special) principal series representation}. 

If $\rho(\mu_1,\mu_2)$ is reducible, it has a unique finite-dimensional constituent, and a unique infinite-dimensional constituent $\mathcal{B}_s(\mu_1,\mu_2)$, also called a \emph{special representation}. 
For special representations, there exists a quasicharacter $\chi$ such that $\mu_1 = \chi |\cdot |^{-\frac{1}{2}}$ and $\mu_2 = \chi |\cdot |^{\frac{1}{2}}$. Moreover, all special representations are twists of the so-called \emph{Steinberg representation} $\mathrm{St}_{\G}$ of $\G$ by quasicharacters $\chi \circ \det$.
%Remove the above three lines?

Summarising, any irreducible admissible representation $\pi \colon \G \to V$ satisfies one of the following:
\begin{itemize}
\item[(1):] it is absolutely cuspidal;
\item[(2a):] it is a principal series representation $\pi(\mu_1,\mu_2)$ for some quasicharacters $\mu_1, \mu_2$;
\item[(2b):] it is a special representation $\sigma(\mu_1,\mu_2)$ for some quasicharacters $\mu_1, \mu_2$;
\item[(3):] it is finite-dimensional and of the form $\pi = \chi \circ \det$ for some quasicharacter $\chi$.
\end{itemize}
More details on $\mathrm{GL}_2$ can be found in e.g.\ \cite{JL}, \cite{bushen}.
\end{remark}

\subsection*{Bernstein decomposition}

We will introduce the Bernstein decomposition, using \cite{bushkutz} and \cite{BDKV} as our main references. Let $\G = \mathrm{GL}_n(K)$ as before.

\begin{definition}\label{inertialclass}
Let $\LL$ be a Levi subgroup of some parabolic $\PP$ inside $\G$ and let $\sigma$ be an irreducible cuspidal representation of $\LL$. We define the \emph{inertial class} $[\LL,\sigma]_{\LL}$ of $(\LL,\sigma)$ in $\LL$ to be all the cuspidal representations $\sigma'$ of $\LL$ such that $\sigma \cong \sigma'\chi$ for $\chi$ an unramified character of $\LL$. Let $\mathcal{B}(\G)$ be the set of all inertial equivalence classes in $\G$.
\end{definition}

We need the following refinement of Lemma \ref{lang}.

\begin{theorem}\label{support}
For every smooth irreducible representation $(\pi,V)$ of $\G$ there exists a parabolic $\PP$ in $\G$ with Levi subgroup $\LL$, and an irreducible supercuspidal representation $\sigma$ of $L$, such that $(\pi, V)$ is equivalent to a subquotient of the parabolic induction $\mathrm{Ind}_{\PP}^{\G} (\sigma)$ %for $\nu$ an unramified quasicharacter of $\LL$ 
\textup{\cite{jac}}. The pair $(\LL,\sigma)$ is determined up to conjugacy; the corresponding intertial class $s = [\LL,\sigma]_{\G}$ is unique \textup{\cite{cas}}.
\end{theorem}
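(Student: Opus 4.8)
\section*{Proof proposal for Theorem \ref{support}}

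The plan is to establish both assertions through the normalized Jacquet restriction functors: existence of a cuspidal support via a minimality argument plus Frobenius reciprocity, and its uniqueness up to conjugacy via the Bernstein--Zelevinsky geometric lemma. The two genuinely nontrivial inputs are exactly the cited works \cite{jac} (Jacquet's finiteness theorem) and \cite{cas} (the cuspidality criterion and the geometric lemma).

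For existence, write $r^{\G}_{\PP}\colon \mathrm{Rep}(\G)\to\mathrm{Rep}(\LL)$ for the normalized Jacquet functor attached to a parabolic $\PP$ with Levi $\LL$. I would use that this functor is exact, transitive along a chain of Levi subgroups, preserves admissibility and finite length (Jacquet's theorem, \cite{jac}), and is left adjoint to $\mathrm{Ind}^{\G}_{\PP}$ (Frobenius reciprocity, the $\delta^{1/2}$ normalization in Definition \ref{parind} being what makes the adjunction clean), together with Casselman's criterion that an irreducible smooth representation is supercuspidal if and only if all of its proper Jacquet modules vanish (\cite{cas}; see also Corollary 3.27 of \cite{BZ}). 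Given $(\pi,V)$ smooth irreducible, choose a Levi subgroup $\LL$ that is minimal among those with $r^{\G}_{\PP}(\pi)\neq 0$; this set is nonempty since $r^{\G}_{\G}=\mathrm{id}$. As $r^{\G}_{\PP}(\pi)$ is a nonzero representation of finite length, it admits an irreducible quotient $\sigma$. Then $\sigma$ is supercuspidal: otherwise some proper Jacquet module $r^{\LL}_{\PP_{1}}(\sigma)$ with $\PP_{1}\subsetneq\LL$ is nonzero, and since $r^{\LL}_{\PP_{1}}$ is exact it sends the surjection $r^{\G}_{\PP}(\pi)\twoheadrightarrow\sigma$ to a surjection, so by transitivity $r^{\G}_{\PP'}(\pi)\neq 0$ for a parabolic $\PP'$ of $\G$ whose Levi lies strictly inside $\LL$, contradicting minimality. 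Finally adjunction converts the quotient map $r^{\G}_{\PP}(\pi)\twoheadrightarrow\sigma$ into a nonzero map $\pi\to\mathrm{Ind}^{\G}_{\PP}(\sigma)$, which is injective by irreducibility of $\pi$; hence $\pi$ is a subquotient (even a subrepresentation) of $\mathrm{Ind}^{\G}_{\PP}(\sigma)$.

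For uniqueness, suppose $\pi$ is a subquotient of $\mathrm{Ind}^{\G}_{\PP}(\sigma)$ with $\sigma$ irreducible supercuspidal on a Levi $\LL$. By exactness of $r^{\G}_{\PP''}$, each Jacquet module $r^{\G}_{\PP''}(\pi)$ is a subquotient of $r^{\G}_{\PP''}(\mathrm{Ind}^{\G}_{\PP}(\sigma))$. The geometric lemma of Bernstein--Zelevinsky (\cite{BZ}; see also \cite{cas}) --- Mackey theory over the double cosets $W_{\LL''}\backslash W/W_{\LL}$ --- filters the latter with subquotients obtained by inducing $w$-twists of Jacquet modules of $\sigma$; since $\sigma$ is supercuspidal those inner Jacquet modules vanish for all double cosets except those with $w\LL w^{-1}\subseteq\LL''$, and a surviving subquotient is itself cuspidal only if in addition the outer induction is trivial, which forces $w\LL w^{-1}=\LL''$ and the subquotient to be $w\sigma$. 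Consequently the Levi subgroups carrying a nonzero Jacquet module of $\pi$ are precisely the conjugates of $\LL$, so the minimal one is conjugate to $\LL$, and the supercuspidal constituent produced in the existence step lies in the $W$-orbit of $\sigma$. Since $\LL$ (as minimal Levi) and the $\G$-conjugacy class of $(\LL,\sigma)$ are thereby intrinsic invariants of $\pi$, any two data $(\LL,\sigma)$, $(\LL',\sigma')$ for $\pi$ are $\G$-conjugate, and the inertial class $s=[\LL,\sigma]_{\G}$ is well-defined.

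The main obstacle is packaged in these two imported ingredients: Jacquet's finiteness theorem together with the cuspidality criterion, and the geometric lemma presenting $r^{\G}_{\PP''}\circ\mathrm{Ind}^{\G}_{\PP}$ as a filtered functor. Granting them, the remainder is bookkeeping: reducing to the supercuspidal situation in which all but the ``top'' terms of the geometric lemma collapse, and tracking the surviving $W$-orbit. The one point requiring care is matching the $\sigma$ obtained abstractly as a quotient of $r^{\G}_{\PP}(\pi)$ in the existence argument with the $\sigma$ occurring in a prescribed realization of $\pi$ as a subquotient of an induced module; this reconciliation is precisely what the supercuspidality-forced collapse in the geometric lemma provides.
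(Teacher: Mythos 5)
The paper gives no proof of this statement: it is quoted as a known result, with the two assertions attributed to \cite{jac} and \cite{cas} respectively. Your sketch reproduces the standard argument from those sources --- existence via a minimal Levi with nonvanishing Jacquet module together with Frobenius reciprocity and Casselman's cuspidality criterion, uniqueness via the supercuspidal collapse of the Bernstein--Zelevinsky geometric lemma --- and it is correct in outline, modulo one imprecise clause: the Levi subgroups carrying a nonzero Jacquet module of $\pi$ are not \emph{precisely} the conjugates of $\LL$ (any Levi containing a conjugate of $\LL$, e.g.\ $\G$ itself, also has nonzero Jacquet module); the correct statement, which is what your geometric-lemma computation actually yields, is that the Levis whose Jacquet module admits a \emph{cuspidal} constituent are exactly the conjugates of $\LL$, and the constituents are the corresponding twists $w\sigma$.
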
 

\begin{definition}
The pair $(\LL,\sigma)$ in the previous theorem is called the \emph{cuspidal support} of $(\pi,V)$; the corresponding intertial class $s = [\LL,\sigma]_{\G}$ is called the \emph{inertial support} of $(\pi,V)$.
\end{definition}

\begin{lemma}[Prop. 2.10 of \cite{BDKV}]
Denote by $\mathfrak{R}(\G)$ the category of smooth representations $(\pi,V)$ of $\G$ and by $\mathfrak{R}^s(\G)$ the full subcategory, whose objects are such that the inertial support of all their respective irreducible $\G$-subquotients is $s$. Then there is a direct product decomposition of categories
\[
\mathfrak{R}(\G) = \prod_{s \in \mathcal{B}(\G)} \mathfrak{R}^s(\G).
\]
\end{lemma}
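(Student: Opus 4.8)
The plan is to establish two independent facts: \textbf{(a)} the subcategories $\mathfrak{R}^s(\G)$ are pairwise orthogonal, i.e.\ $\mathrm{Hom}_{\G}(V,W)=0=\mathrm{Ext}^1_{\G}(V,W)$ whenever $V\in\mathfrak{R}^s(\G)$, $W\in\mathfrak{R}^{s'}(\G)$ and $s\neq s'$; and \textbf{(b)} every smooth $(\pi,V)$ admits a decomposition $V=\bigoplus_{s\in\mathcal{B}(\G)}V^s$ with $V^s\in\mathfrak{R}^s(\G)$. Granting both, the functor $\prod_s\mathfrak{R}^s(\G)\to\mathfrak{R}(\G)$ sending $(V_s)_s$ to $\bigoplus_s V_s$ is essentially surjective by (b) and fully faithful by (a), hence an equivalence, which is the content of the lemma.

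Part (a) is formal given Theorem \ref{support}. If $f\colon V\to W$ is a morphism as above, then $f(V)$ is at once a quotient of $V$ and a subobject of $W$, so any irreducible subquotient of $f(V)$ would have inertial support equal both to $s$ and to $s'$; the uniqueness clause of Theorem \ref{support} then forces $f(V)=0$. For an extension $0\to W\to E\to V\to 0$ I would take the largest subrepresentation $E^{s}\subseteq E$ all of whose irreducible subquotients have inertial support $s$ --- this makes sense because that property is stable under subrepresentations, quotients and extensions --- and check that $E^{s}\to V$ is an isomorphism, splitting the sequence.

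For part (b), let $V^s\subseteq V$ be the sum of all subrepresentations whose irreducible subquotients all have inertial support $s$; it is the largest such subrepresentation, it is functorial in $V$, it commutes with filtered colimits, and $\sum_s V^s$ is direct by (a). Since every smooth representation is the filtered union of its finitely generated subrepresentations, it suffices to prove $V=\bigoplus_s V^s$ when $V$ is finitely generated. Choosing a compact open subgroup $K_0$ that fixes a finite generating set, Frobenius reciprocity presents $V$ as a quotient of finitely many copies of $W:=\mathrm{ind}_{K_0}^{\G}\mathbf{1}$, so it is enough to decompose $W$; and since $W^{K_0}$ is the regular rank-one module over $\HH(\G,K_0):=e_{K_0}\ast\HH_G(K)\ast e_{K_0}$, that decomposition should be read off, via the exact functor $(-)^{K_0}$, from a decomposition of the unital algebra $\HH(\G,K_0)$ into a product of finitely many factors.

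Producing that algebra decomposition is the crux and, I expect, the main obstacle. It rests on Bernstein's theorem that $\HH(\G,K_0)$ is a finitely generated module over its centre $Z(\HH(\G,K_0))$, and that this centre is a finitely generated commutative $\C$-algebra; this is the point at which the structure theory of $p$-adic reductive groups really intervenes, through cuspidal support (Theorem \ref{support}), the uniform admissibility theorem bounding $\dim\pi^{K_0}$ uniformly over all irreducible smooth $\pi$, and the geometric lemma describing Jacquet modules of parabolic inductions. Granting it, $\mathrm{Spec}\,Z(\HH(\G,K_0))$ is an affine $\C$-variety, hence has finitely many connected components, which split $\HH(\G,K_0)$ into finitely many factors; pulling this idempotent decomposition back through $(-)^{K_0}$ decomposes $W$, and therefore every finitely generated $V$, into finitely many summands whose irreducible subquotients have inertial supports confined to a finite set $S\subseteq\mathcal{B}(\G)$, and one reindexes this as $V=\bigoplus_{s\in S}V^s$. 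To see that the resulting decomposition is indexed precisely by $\mathcal{B}(\G)$ and cannot be coarsened, one identifies the connected components of the spectrum of the full Bernstein centre $\mathrm{End}(\mathrm{id}_{\mathfrak{R}(\G)})$ with the varieties $\Omega_s$ of unramified twists of the cuspidal data (each modulo a finite group), so that these components biject with $\mathcal{B}(\G)$; equivalently, each $\mathfrak{R}^s(\G)$ is indecomposable.
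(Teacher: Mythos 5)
The paper does not prove this lemma at all: it is imported verbatim as Proposition 2.10 of \cite{BDKV}, so there is no internal argument to compare yours against. Judged on its own terms, your sketch follows the standard route of Bernstein's proof (orthogonality of the blocks plus a functorial decomposition $V=\bigoplus_s V^s$, reduced to finitely generated $V$, then to $\mathrm{ind}_{K_0}^{\G}\mathbf{1}$, then to central idempotents of $e_{K_0}\ast\HH_G(K)\ast e_{K_0}$), and the Hom-vanishing argument and the reduction to the finitely generated case are both fine. The one genuinely useful observation to add is that central idempotents of $\HH(\G,K_0)\cong\mathrm{End}_{\G}(W)^{\mathrm{op}}$ act directly on $W$ as $\G$-equivariant idempotents, so the decomposition of $W$ does not actually require $(-)^{K_0}$ to be an equivalence of categories --- which is good, because for a general compact open $K_0$ it is not one (the subcategory of representations generated by their $K_0$-fixed vectors need not be closed under subquotients; this is exactly why Bernstein works with special congruence subgroups and proves a stabilization theorem).

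There are, however, two places where the argument as written has real gaps. First, your part (a) is not ``formal'': the $\mathrm{Hom}$-vanishing is, but the $\mathrm{Ext}^1$-splitting is not. Defining $E^s$ as the largest subrepresentation with all irreducible subquotients of support $s$ gives injectivity of $E^s\to V$ for free, but the surjectivity you ask the reader to ``check'' is precisely the content of the decomposition in (b) applied to $E$; so (a) and (b) are not independent, and the logical order should be (b) first, with (a) deduced from it. Second, and more seriously, the crux --- that $Z(\HH(\G,K_0))$ is a finitely generated $\C$-algebra over which $\HH(\G,K_0)$ is finite, and that the connected components of its spectrum are indexed by the inertial classes $\Omega_s$ rather than by coarser or finer unions --- is exactly the hard content of Bernstein's theory (uniform admissibility, the stabilization theorem, and the computation of the Bernstein centre), and your proposal defers all of it with ``granting it''. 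That is a reasonable thing to do for a theorem of this depth, and you are candid about it, but it means the proposal is an accurate proof outline rather than a proof: the decisive step is cited, not established, which puts it on the same footing as the paper's own treatment (a citation of \cite{BDKV}), only with more of the scaffolding made explicit.
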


%\begin{proposition}
%Every smooth representation $(\pi,V)$ of $G$ embeds into some $\mathrm{Ind}_{L}^{G}(\sigma)$ where $(L,\sigma)$ is unique up to $G$-conjugacy and is called the \emph{cuspidal support} of $(\pi,V)$.
%\end{proposition}

\begin{corollary}\label{Bernsteinblock}
Let $\mathcal{H}^s_{G}(K)$ be the two-sided ideal of $\mathcal{H}_{G}(K)$ corresponding to all smooth representations $(\pi,V)$ of $\G$ of inertial support $s = [\LL,\sigma]_{\G}$. That is, $\mathcal{H}^s_{G}(K)$ is the unique and maximal $\G$-subspace of $\mathcal{H}_{G}(K)$ lying in $\mathfrak{R}^s(\G)$. We call $\mathcal{H}^s_{G}(K)$ a \emph{Bernstein block}.
\end{corollary}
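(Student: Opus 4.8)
The plan is to deduce the statement by applying the category decomposition of the preceding lemma to the object $\mathcal{H}_G(K)$ itself. First I would recall that $\mathcal{H}_G(K) = C_c^\infty(\G)$ carries the $\G\times\G$-action $((g_1,g_2)\cdot\phi)(x) = \phi(g_1^{-1}xg_2)$, which is smooth when restricted to either factor; fixing for concreteness the left-translation action $L$, this makes $\mathcal{H}_G(K)$ an object of $\mathfrak{R}(\G)$. The decomposition $\mathfrak{R}(\G) = \prod_{s} \mathfrak{R}^s(\G)$ then produces a canonical direct sum decomposition $\mathcal{H}_G(K) = \bigoplus_{s \in \mathcal{B}(\G)} V_s$ with $V_s \in \mathfrak{R}^s(\G)$, in which $V_s$ is by construction the largest $\G$-subspace of $\mathcal{H}_G(K)$ lying in $\mathfrak{R}^s(\G)$: since the decomposition of categories is a product, any subrepresentation contained in $\mathfrak{R}^s(\G)$ has vanishing components away from the $s$-th factor and hence is contained in $V_s$. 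Putting $\mathcal{H}^s_G(K) := V_s$ then settles the uniqueness and maximality assertions.

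The remaining task is to show that $V_s$ is a two-sided ideal of $\mathcal{H}_G(K)$. I would first observe that, because left and right translations commute, every right translation $R_g$ is an endomorphism of $\mathcal{H}_G(K)$ in $\mathfrak{R}(\G)$; since morphisms in a product category act factor by factor, each $R_g$ preserves every summand $V_s$, so that $V_s$ is in fact stable under the whole $\G\times\G$-action. To upgrade this to the ideal property I would compute directly in the convolution formula \eqref{conv}: the substitution $u = gh^{-1}$, together with unimodularity of $\G$, rewrites $f \ast \phi$ as $\int_\G f(u)\,(L_u\phi)\,d\mu(u)$, and — since $\phi$ is fixed by some compact open subgroup and $f$ has compact support — this integral is a \emph{finite} $\C$-linear combination of left translates of $\phi$, whence $\mathcal{H}_G(K) \ast V_s \subseteq V_s$. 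Symmetrically, $\phi \ast f = \int_\G f(h)\,(R_{h^{-1}}\phi)\,d\mu(h)$ is a finite linear combination of right translates of $\phi$, giving $V_s \ast \mathcal{H}_G(K) \subseteq V_s$. Thus $\mathcal{H}^s_G(K) = V_s$ is a two-sided ideal, and the two descriptions in the statement pick out the same object.

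The only ingredient here that is not purely formal is the last reduction of the convolution integral to a finite sum — that is, that $u \mapsto L_u\phi$ is locally constant and therefore takes only finitely many values on the compact support of $f$, which is exactly the smoothness of $\phi$ — and this is the one point I would spell out carefully; everything else is bookkeeping with the orthogonal decomposition supplied by the previous lemma. An alternative route, which sidesteps even this, is to invoke the Bernstein centre: the product decomposition of $\mathfrak{R}(\G)$ corresponds to an orthogonal decomposition $1 = \sum_s e_s$ of the identity of the multiplier algebra of $\mathcal{H}_G(K)$ into central idempotents with $\mathcal{H}^s_G(K) = e_s \ast \mathcal{H}_G(K) = \mathcal{H}_G(K) \ast e_s$, from which the two-sided ideal property is immediate; but this merely repackages the same content.
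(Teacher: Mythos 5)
Your argument is correct and follows the route the paper intends: the paper states this corollary without proof as an immediate consequence of the preceding lemma (the decomposition $\mathfrak{R}(\G)=\prod_s\mathfrak{R}^s(\G)$ from \cite{BDKV}), applied to $\mathcal{H}_G(K)$ viewed as a smooth $\G$-module under translation. Your write-up simply makes explicit the standard bookkeeping --- stability of each summand under the commuting translation action and the reduction of convolution against a smooth function to a finite sum of translates --- which is exactly the content implicitly being cited.
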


\begin{definition}
The Hecke algebra $\mathcal{H}_{G}(K)$ has a \emph{Bernstein decomposition}
\[
\mathcal{H}_{G}(K) = \bigoplus_{s \in \mathcal{B}(\G)} \mathcal{H}^{s}_{G}(K).
\]
\end{definition}

\begin{definition}\label{intertw}
Let $(\rho,W)$ be a smooth representation of a compact open subgroup $\K$ of $\G$, whose contragredient representation is denoted $(\check{\rho}, \check{W})$.\\
The \emph{$\rho$-spherical Hecke algebra} $\mathcal{H}(\G,\rho)$ is the unital associative $\C$-algebra of finite type, consisting of compactly supported functions $f \colon \G \to \mathrm{End}_{\C}(\check{W})$ satisfying $f(k_1 g k_2) = \check{\rho}(k_1) f(g) \check{\rho}(k_2)$ for all $g \in \G, k_1,k_2 \in \K$. It is also called the \emph{intertwining algebra}, since 
\[
\mathcal{H}(\G,\rho) \cong \mathrm{End}_{\G}(\mathrm{ind}_{\K}^{\G}(\rho))
\]
by (2.6) of \cite{bushkutz}, where $\mathrm{ind}$ denotes compact induction.
\end{definition}

\begin{proposition}[Prop. 5.6 of \cite{bushkutz}]\label{block}
Every $\mathcal{H}^{s}_{G}(K)$ is a non-commutative, non-unital, non-finitely generated non-reduced $\C$-algebra, which is Morita equivalent to some intertwining algebra $\mathcal{H}(\G,\rho)$.
\end{proposition}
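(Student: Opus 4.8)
The plan is to deduce both assertions from the Bushnell--Kutzko theory of types \cite{bushkutz}, where all the real content sits in a single deep input. That input is the existence, for every inertial class $s = [\LL,\sigma]_{\G} \in \mathcal{B}(\G)$, of an \emph{$s$-type}: a pair $(\K,\rho)$ with $\K$ a compact open subgroup of $\G$ and $\rho$ an irreducible smooth (hence finite-dimensional) representation of $\K$, characterised by the property that for an irreducible smooth representation $\pi$ of $\G$ one has $\mathrm{Hom}_{\K}(\rho,\pi) \neq 0$ if and only if $\pi \in \mathfrak{R}^s(\G)$. For $\mathrm{GL}_n$ such types exist by the classification of simple types attached to the supercuspidal classes, together with the construction of types for the remaining (non-supercuspidal) Levi data via the theory of covers.

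Given an $s$-type $(\K,\rho)$, I would first upgrade it to a projective generator of the block. Since $\K$ is open, compact induction $\mathrm{ind}_{\K}^{\G}$ is left adjoint to the exact restriction functor $\mathrm{Res}_{\K}^{\G}$, and $\rho$ is projective in $\mathfrak{R}(\K)$ because smooth complex representations of a profinite group are semisimple; hence $\mathrm{ind}_{\K}^{\G}(\rho)$ is a finitely generated projective object of $\mathfrak{R}(\G)$, and the defining property of an $s$-type makes it a generator of $\mathfrak{R}^s(\G)$. By the Morita theorem for abelian categories possessing a finitely generated projective generator, the functor
\[
V \longmapsto \mathrm{Hom}_{\G}\bigl(\mathrm{ind}_{\K}^{\G}(\rho),\,V\bigr)\ \cong\ \mathrm{Hom}_{\K}(\rho,V)
\]
is an equivalence between $\mathfrak{R}^s(\G)$ and the category of modules over the ring $\mathrm{End}_{\G}\bigl(\mathrm{ind}_{\K}^{\G}(\rho)\bigr) = \mathcal{H}(\G,\rho)$, the last identification being Definition \ref{intertw}. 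On the other hand $\mathfrak{R}(\G)$ is equivalent to the category of non-degenerate $\mathcal{H}_G(K)$-modules, and by the Bernstein decomposition (Corollary \ref{Bernsteinblock}) the factor $\mathfrak{R}^s(\G)$ corresponds to the category of non-degenerate modules over the ideal $\mathcal{H}^s_G(K)$. Comparing the two descriptions, the categories of non-degenerate $\mathcal{H}^s_G(K)$-modules and of $\mathcal{H}(\G,\rho)$-modules are equivalent, which for idempotented $\C$-algebras is precisely the statement that $\mathcal{H}^s_G(K)$ is Morita equivalent to $\mathcal{H}(\G,\rho)$.

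For the ring-theoretic adjectives I would argue directly. Writing $e_{\K'}$ for the normalised indicator function of a compact open subgroup $\K'$, one has $\mathcal{H}_G(K) = \bigcup_{\K'} e_{\K'} \ast \mathcal{H}_G(K) \ast e_{\K'}$ with each summand unital, and $\mathcal{H}^s_G(K)$ is the ideal cut out by the corresponding central idempotent of the multiplier algebra. Since $\mathfrak{R}^s(\G)$ contains objects with nonzero $\K'$-fixed vectors for $\K'$ arbitrarily small, no single $e_{\K'}$ acts as a unit on $\mathcal{H}^s_G(K)$; this gives non-unitality, and also non-finite-generation, since a finite generating set would be contained in one $e_{\K'} \ast \mathcal{H}^s_G(K) \ast e_{\K'}$. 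Non-commutativity follows from the non-abelianness of $\G = \mathrm{GL}_n(K)$ for $n \geq 2$ together with the fact that the block contains matrix units between distinct $\rho$-isotypic constituents of $\mathrm{ind}_{\K}^{\G}(\rho)$, i.e.\ a copy of $M_2(\C)$; and non-reducedness is witnessed by a nonzero nilpotent element, for instance a suitable combination of indicator functions of cosets of a pro-$p$ subgroup lying in the block.

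The only hard step is the existence of an $s$-type for every $s$: all the categorical manipulations and the ring-theoretic verifications above are formal. A self-contained proof would therefore have to reproduce the Bushnell--Kutzko construction of simple strata, simple characters, simple types and covers in full, which is why the natural course is simply to cite \cite{bushkutz}.
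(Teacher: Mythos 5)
Your argument is correct, and it leans on exactly the same deep input as the paper --- the Bushnell--Kutzko existence of an $s$-type $(\K,\rho)$ for every $s \in \mathcal{B}(\G)$ --- but it packages the Morita equivalence differently. The paper's sketch is idempotent-theoretic: it takes the idempotent $e_{\rho} \in \mathcal{H}_G(K)$ attached to $(\K,\rho)$, identifies the block as the two-sided ideal $\mathcal{H}_G(K) \ast e_{\rho} \ast \mathcal{H}_G(K)$, invokes the general equivalence $\mathcal{H} e \mathcal{H} \sim_M e \mathcal{H} e$ (Lemma 2 of \cite{conmar}), and then cites $(2.12)$ of \cite{bushkutz} for the explicit unital isomorphism $e_{\rho} \ast \mathcal{H}_G(K) \ast e_{\rho} \cong \mathcal{H}(\G,\rho) \otimes_{\C} \mathrm{End}_{\C}(W)$. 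You instead run the categorical Morita theorem on the progenerator $\mathrm{ind}_{\K}^{\G}(\rho)$ of $\mathfrak{R}^s(\G)$; this is the route of $(4.3)$ of \cite{bushkutz} and is equivalent to the corner-ring computation (the corner $e_{\rho}\ast\mathcal{H}_G(K)\ast e_{\rho}$ is precisely $\mathrm{End}_{\G}$ of the $\rho$-isotypic part of the regular representation). The idempotent version buys the concrete description $\mathcal{H}^s_G(K)=\mathcal{H}_G(K)\ast e_{\rho}\ast\mathcal{H}_G(K)$, which the paper reuses later; your version is cleaner and sidesteps the $\mathrm{End}_{\C}(W)$ factor, at the cost of needing to upgrade the irreducible-level defining property of a type to the statement that $\mathrm{ind}_{\K}^{\G}(\rho)$ generates the whole block --- that upgrade is a genuine (if standard) step in \cite{bushkutz}, not purely formal, so flag it as part of what you are citing. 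You also supply justifications for the four ring-theoretic adjectives, which the paper's sketch omits entirely; there the one thing to tighten is that non-commutativity does not follow from non-abelianness of $\G$ alone (a non-abelian group can have commutative convolution quotients) --- it is your matrix-units observation, which also yields nilpotents and hence non-reducedness, that does the work.
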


\begin{proof}[Sketch of proof]
For every equivalence class $s$ there exist a compact open subgroup $\K$ of $\G$, a smooth representation $(\rho,W)$ of $\K$ and an idempotent element $e_{\rho} \in \mathcal{H}_{G}(K)$ (cf.\ (2.9) of \cite{bushkutz}) which satisfies
\[
e_{\rho}(x) = \begin{cases}
   \frac{\mathrm{dim}(\rho)}{\mu_{\G}(\K)} \mathrm{tr}_{W}(\rho(x^{-1})) & \text{if } x \in \K \\
   0       & \text{if } x \in \G, x \not\in \K
  \end{cases},
\]
such that 
\[
\mathcal{H}^s_{G}(K) = \mathcal{H}_{G}(K) \ast e_{\rho} \ast \mathcal{H}_{G}(K).
\]
There is a Morita equivalence (cf. \cite{conmar}, Lemma 2)
\[
\mathcal{H}_{G}(K) \ast e_{\rho} \ast \mathcal{H}_{G}(K) \sim_M e_{\rho} \ast \mathcal{H}_{G}(K) \ast e_{\rho}
\]
and the latter is proven in (2.12) of \cite{bushkutz} to be isomorphic as a unital $\C$-algebra to
\begin{equation}\label{eq1}
e_{\rho} \ast \mathcal{H}_{G}(K) \ast e_{\rho} \cong \mathcal{H}(\G,\rho) \otimes_{\C} \mathrm{End}_{\C}(W)
\end{equation}
where $\mathcal{H}(\G,\rho)$ is as in Definition \ref{intertw}. In particular, there is a Morita equivalence
\begin{equation}\label{eq2}
\mathcal{H}^s_{G}(K) \sim_M \mathcal{H}(\G,\rho),
\end{equation}
i.e., the categories of modules over the left resp. right hand side of (\ref{eq2}) are equivalent. 
\end{proof}

\section{Proof of Theorem \ref{introB}}

\begin{definition}
The \emph{(extended) affine Weyl group} of $\mathrm{GL}_n$ is $\tilde{W}_n \cong S_n \ltimes \Z^n$, where the symmetric group $S_n$ acts by permuting the factors of $\Z^n$. We denote its group algebra by 
\[
\C[\tilde{W}_n] = \C[S_n \ltimes \Z^n].
\]
\end{definition}

\noindent In this section, we will prove the following result, which immediately implies Theorem \ref{introB}.

\begin{theorem}\label{B}
Let $K$ be a non-archimedean local field of characteristic zero and $G = \mathrm{GL}_2$. Then up to Morita equivalence, the Bernstein decomposition of $\mathcal{H}_G(K)$ is always of the form
%\begin{equation}\label{con}
%\mathcal{H}_{\mathrm{GL}_2}(K) \sim_M \left( \bigoplus_{\N} \C[T,T^{-1}] \right) \oplus \left( \bigoplus_{\N} \C[X,X^{-1},Y,Y^{-1}] \right)\oplus \left( \bigoplus_{\N} \C[\tilde{W}_2] \right) .
%\end{equation}
\begin{equation}\label{con}
\mathcal{H}_{\mathrm{GL}_2}(K) \sim_M \bigoplus_{\N} \left( \C[T,T^{-1}] \oplus \C[X,X^{-1},Y,Y^{-1}] \oplus \frac{\C[S,T,T^{-1}]}{\langle S^2-1, T^2S-ST^2 \rangle} \right).
\end{equation}
In particular, if $K$ and $L$ are any two non-archimedean local fields of characteristic zero, then 
\[
\HH_G(K) \sim_M \HH_G(L).
\]
\end{theorem}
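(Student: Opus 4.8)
The plan is to prove the displayed Morita equivalence \eqref{con} by going through the Bernstein decomposition of $\HH_{\mathrm{GL}_2}(K)$ block by block, using the classification of irreducible admissible representations recalled in Remark \ref{classificationGl2} to enumerate the inertial classes $s \in \mathcal{B}(\mathrm{GL}_2(K))$, and then identifying each intertwining algebra $\HH(\G,\rho)$ (whose Morita class equals that of the block, by Proposition \ref{block}). The key observation is that the resulting list of Morita classes is \emph{independent} of $K$: it depends only on the combinatorics of inertial classes and on the structure of the relevant Hecke algebras with parameters, all of which for $\mathrm{GL}_2$ turn out to be the three algebras appearing in \eqref{con}. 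Once \eqref{con} is established, the final statement is immediate, since the right-hand side is visibly the same algebra for $K$ and for $L$, and Morita equivalence is transitive.

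First I would stratify $\mathcal{B}(\mathrm{GL}_2(K))$ according to the Levi subgroup $\LL$ supporting the cuspidal pair. There are two conjugacy classes of Levi subgroups: $\LL = \G = \mathrm{GL}_2(K)$ itself, and $\LL = \T \cong (K^\times)^2$ the diagonal torus. For $\LL = \G$, the inertial class $[\G,\sigma]_\G$ with $\sigma$ supercuspidal gives a block whose category is that of unramified twists of $\sigma$; the intertwining algebra is $\C[T,T^{-1}]$, the ring of functions on the complex torus of unramified characters $\vert\cdot\vert^z$ (one copy of $\C^\times$ up to the finite stabiliser, which for $\mathrm{GL}_2$ does not change the Morita class). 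The finite-dimensional (one-dimensional) representations $\chi\circ\det$ are the special case where $\sigma$ has dimension $1$; these also contribute blocks of the form $\C[T,T^{-1}]$. For $\LL = \T$, a cuspidal representation of $\T$ is a pair of quasicharacters $(\mu_1,\mu_2)$ of $K^\times$; the inertial class records the pair of their restrictions to $\cO_K^\times$ up to the $S_2$-action swapping them. When $\mu_1$ and $\mu_2$ have distinct restrictions to $\cO_K^\times$, the stabiliser in $S_2$ is trivial and the intertwining algebra is $\C[X,X^{-1},Y,Y^{-1}] = \C[(\C^\times)^2]$; when they have the same restriction, the nontrivial element of $S_2$ acts, and one obtains the affine-Hecke-type algebra $\C[S,T,T^{-1}]/\langle S^2-1, T^2S-ST^2\rangle$ (this is the Iwahori–Hecke algebra of the affine Weyl group $\tilde W_1 \cong S_2 \ltimes \Z$ with the appropriate parameter; for $\mathrm{GL}_2$ the parameter deformation does not affect the Morita class because the algebra is already the correct one, or one invokes the known structure theory — e.g.\ \cite{bushkutz} — that these blocks are all isomorphic to this single algebra). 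The point is that in each of the finitely many ``shapes'' the algebra is one of the three listed, and countably infinitely many inertial classes of each shape occur (there are infinitely many supercuspidals, infinitely many characters of $\cO_K^\times$, etc.), giving the $\bigoplus_\N$.

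Next I would check that the indexing set really is $\N$ for each summand type and that no type is missing or overcounted: this amounts to noting that for any non-archimedean local field of characteristic zero, each of the three families of inertial classes is countably infinite (using that $K^\times/\cO_K^\times \cong \Z$, that $\cO_K^\times$ has countably many smooth characters, and that there are countably infinitely many equivalence classes of supercuspidal representations of $\mathrm{GL}_1$ and $\mathrm{GL}_2$). Assembling the per-block Morita equivalences $\HH^s_G(K) \sim_M \HH(\G,\rho_s)$ from Proposition \ref{block}, taking the direct sum over all $s$, and invoking that a direct sum of Morita equivalences is a Morita equivalence of the direct sums, yields \eqref{con}. Finally, for the last assertion: \eqref{con} for $K$ and for $L$ produces Morita equivalences to one and the same algebra $A := \bigoplus_\N(\C[T,T^{-1}] \oplus \C[X,X^{-1},Y,Y^{-1}] \oplus \C[S,T,T^{-1}]/\langle S^2-1,T^2S-ST^2\rangle)$, so $\HH_G(K) \sim_M A \sim_M \HH_G(L)$, and $\HH_G(K) \sim_M \HH_G(L)$ follows from transitivity and symmetry of Morita equivalence; since $G = \mathrm{GL}_2$ this is exactly Theorem \ref{introB}.

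The main obstacle is the precise identification of the intertwining algebra $\HH(\G,\rho)$ attached to each inertial class — in particular, ruling out that unequal-valued Hecke parameters (which genuinely depend on $K$, e.g.\ on the residue characteristic) could produce non-Morita-equivalent algebras. For $\mathrm{GL}_2$ the relevant deformed Hecke algebras are of rank at most $1$, and one must verify, using the explicit structure results for Bushnell–Kutzko types in $\mathrm{GL}_2$ (principal series blocks and their normaliser actions, and the fact that supercuspidal blocks are ``toral'' with trivial or $\Z/2$ parameter group), that the Morita class collapses to the field-independent list above. This is the technical heart of Section \ref{global}'s computation and is where the genericity of $\mathrm{GL}_2$ (as opposed to $\mathrm{GL}_n$, $n \ge 3$, where richer Hecke parameters appear) is essential.
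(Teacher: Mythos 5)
Your overall strategy matches the paper's: decompose into Bernstein blocks, replace each block by its intertwining algebra via Proposition \ref{block}, enumerate the inertial classes by Levi subgroup, and observe that only three algebras occur, each countably often. However, there is a genuine gap at exactly the point you flag as ``the technical heart'' and then do not carry out. The intertwining algebra of a principal-series block with $\mu_1\vert_{\cO_K^\times} = \mu_2\vert_{\cO_K^\times}$ is the affine Hecke algebra $\HH(2,q^k)$ with quadratic relation $(S+1)(S-q^k)=0$, \emph{not} $S^2=1$; so it is not ``already the correct one,'' and the parameter $q^k$ visibly depends on $K$. The theorem's entire content is that this dependence disappears, and that requires a proof: the paper supplies one by exhibiting an explicit algebra isomorphism $\HH(2,r)\cong\C[\tilde{W}_2]$ for every $r\neq -1$, via the renormalised generator $\bar{s}_1=\frac{r+1}{2}s_1+\frac{r-1}{2}$ (which squares to $1$ precisely because $(s_1+1)(s_1-r)=0$ can be completed to a square when $r\neq-1$, and which still satisfies $T^2\bar{s}_1=\bar{s}_1T^2$). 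Since $q^k$ is a positive power of $p$, the hypothesis $r\neq-1$ always holds. Your alternative of ``invoking the known structure theory'' does not discharge this either, since Bushnell--Kutzko give you exactly $\HH(2,q^k)$ and nothing more; without the change-of-generators lemma the claimed $K$-independence is unproved.

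A second, smaller error: you place the one-dimensional representations $\chi\circ\det$ in blocks of the form $[\G,\sigma]_\G$ with $\sigma$ one-dimensional, contributing extra $\C[T,T^{-1}]$ summands. This misidentifies their cuspidal support. A one-dimensional representation of $\mathrm{GL}_2(K)$ is not cuspidal (cuspidal representations here are infinite-dimensional, cf.\ Definition \ref{cuspidal}); $\chi\circ\det$ arises as the finite-dimensional subquotient of the reducible principal series $\rho(\chi\vert\cdot\vert^{z+1},\chi\vert\cdot\vert^{z})$, so its inertial support is $[\T,\rho]_\G$ and it lives in the \emph{same} Bernstein block as the corresponding twist of Steinberg, with intertwining algebra $\C[\tilde{W}_2]$. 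Because each algebra type already occurs $\N$-many times, this miscount happens not to change the Morita class of the total direct sum, but the enumeration of blocks as written is wrong. (Also, the relevant extended affine Weyl group is $\tilde{W}_2\cong S_2\ltimes\Z^2$, not $S_2\ltimes\Z$.)
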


\begin{proof}
By Lemma \ref{lang}, every irreducible representation of $\G$ is a subquotient of a parabolically induced representation $\mathrm{ind}_{\PP}^{\G}(\sigma_1 \ldots \otimes \sigma_r)$, where the $\sigma_i$ are irreducible supercuspidal representations of $\mathrm{GL}_{n_i}$ and $(n_1,\ldots,n_r)$ is a partition of $n$, so that consecutive segments do not precede each other. Note that $n_i$ is the multiplicity of $\sigma_i$ in the tensor product, so that $n_i = 1$ or $2$ always.

Proposition \ref{block} implies that to determine the corresponding Bernstein blocks $\mathcal{H}_{G}^s(K)$ of the Hecke algebra up to Morita equivalence, it suffices to determine all intertwining algebras $\mathcal{H}(\G,\rho)$ that occur.
%since the categories of modules over $\mathcal{H}^s_{G}(K)$ resp. $\mathcal{H}(\G,\rho)$ are equivalent. 
To do this, we need the following definition.

%By \cite{bushkutz2}, $\mathcal{H}(\G,\rho)$ is isomorphic to a tensor product of (extended) affine Hecke algebras $\mathcal{H}(m,r)$, where $m$ and $r$ are integers which are determined by $\rho$. We will see that our results do not depend on $r$, which will always be a power of the residual characteristic $p$ of $K$. First, we give the general definition of an affine Hecke algebra.

\begin{definition}[cf.\ (5.4.6) of \cite{bushkutz0}]\label{IH}
Let $m \in \Z_{>0}$ and $r \in \C^{\times}$. The \emph{affine Hecke algebra} $\mathcal{H}(m,r)$ is the associative unital $\C$-algebra generated by elements $S_i$ ($1 \leq i \leq m-1$), $T$, $T^{-1}$, satisfying the following relations:
%\[
%\begin{cases}
%(S_i + 1)(S_i - r) = 0$ for $1 \leq i \leq m-1,\\
%T^2S_1 = S_{m-1}T^2,\\
%TS_i = S_{i-1}T$ for $2 \leq i \leq m-1,\\
%S_i S_{i+1} S_i = S_{i+1}S_iS_{i+1}$ for $1 \leq i \leq m-2,\\
%S_i S_j = S_j S_i$ for $1 \leq i,j \leq m-1$ such that $\vert i-j \vert \geq 2.
%\end{cases}
%\]
\begin{enumerate}
\item $(S_i + 1)(S_i - r) = 0$ for $1 \leq i \leq m-1$,
\item $T^2S_1 = S_{m-1}T^2$,
\item $TS_i = S_{i-1}T$ for $2 \leq i \leq m-1$,
\item $S_i S_{i+1} S_i = S_{i+1}S_iS_{i+1}$ for $1 \leq i \leq m-2$,
\item $S_i S_j = S_j S_i$ for $1 \leq i,j \leq m-1$ such that $\vert i-j \vert \geq 2$.
\end{enumerate}
Note that when $m=1$, we have $\mathcal{H}(1,r) \cong \C[T,T^{-1}]$ for any value of $r$. Moreover, note that when $m \leq 2$, relations $(3)$,$(4)$ and $(5)$ are vacuous.
\end{definition}
%Treating all cases of Proposition \ref{classification} separately, we will now determine the corresponding affine Hecke algebras. 

By the Main Theorem of \cite{bushkutz2}, the intertwining algebra corresponding to $\mathrm{ind}_{\PP}^{\G}(\sigma_1\otimes~\ldots~\otimes~\sigma_r)$ is isomorphic to the tensor product $\otimes_{i=1}^r \mathcal{H}(n_i,q^{k_i})$ of affine Hecke algebras, where $n_i \leq 2$ since $n = 2$. Here, $q$ is the size of the residue field of $K$, while $k_i$ is the so-called torsion number of $\sigma_i$, cf.\ \cite{conmar}, p.22. In particular, $q^{k_i} \neq -1$ always. A priori, the Hecke algebra $\mathcal{H}(2,q^{k_i})$ depends on $q^{k_i}$. However, we now prove the following.

\begin{lemma}
For any $r \neq -1$, there is an algebra isomorphism $\mathcal{H}(2,r) \cong \C[\tilde{W}_2]$.
\end{lemma}
\begin{proof}
First let $r=1$. Let $\varpi$ be a uniformiser of $K$. Since $\varpi$ is not a root of unity, we may alternatively (cf.\ \cite{bushkutz0}, pp.\ 177--178) write $\tilde{W}_2 = \langle\Pi\rangle\ltimes W$, where 
\[
\Pi = \left(
\begin{array}{cc}
0 & 1 \\
\varpi & 0
\end{array}
\right),
\]
%\[
%\Pi = \left(
%\begin{array}{cccccc}
%0 & 1 & 0 & \dots & \dots & 0 \\
%0 & 0 & 1 & 0 & \dots & 0 \\
%\vdots & \vdots & \ddots & \ddots & \ddots & \vdots \\
%\vdots & \vdots & \ddots & \ddots & \ddots & 0 \\
%0 & 0 & \dots & \dots & 0 & 1 \\
%\varpi & 0 & \dots & \dots & 0 & 0
%\end{array}
%\right),
%\]
and $W$ is generated by 
\[
s_1 = \left(
\begin{array}{cc}
0 & 1 \\
1 & 0
\end{array}
\right).
\]
%\[
%s_1 = \left(
%\begin{array}{cccccc}
%0 & 1 & 0 & \dots & \dots & 0 \\
%1 & 0 & 0 & \dots & \dots & 0 \\
%0 & 0 & 1 & 0 & \dots & 0 \\
%\vdots & \vdots & 0 & 1 & \ddots & \vdots \\
%\vdots & \vdots & \vdots & \ddots & \ddots & \vdots \\
%0 & 0 & \dots & \dots & 0 & 1
%\end{array}
%\right),
%\]
%and 
%\[
%s_i = \Pi^{-1} s_{i-1} \Pi  \mbox{   , for } i = 2,\ldots,n-1.
%\]
We may check that $s_1$ has order 2 and that sending $S_1 \mapsto s_1$, and $T \mapsto \Pi$ (and $T^{-1} \mapsto \Pi^{-1}$) yields an algebra isomorphism $\mathcal{H}(2,1) \to \C[\tilde{W}_2]$.\\

Now let $r \in \C^{\times}\setminus \{-1\}$ and let (cf.\ p.\ 113 of \cite{xi})
\[
\bar{s}_1 = \left(\frac{q+1}{2}s_1 + \frac{q-1}{2}\right) \in \C[\tilde{W}_2].
\]
Then relation $(2)$,
\[
\Pi^2\bar{s}_1 = \bar{s}_{1}\Pi^2
\]
still holds. Hence, the map $S_1 \mapsto \bar{s}_1$, and $T \mapsto \Pi$ (and $T^{-1} \mapsto \Pi^{-1}$) determines an algebra isomorphism $\mathcal{H}(2,q) \to \C[\tilde{W}_2]$, for any $r$ other than $r= -1$.
\end{proof}

It follows from the lemma that the intertwining algebra for the partition $(n_1, \ldots, n_r)$ of $n$, corresponding to the representation $\mathrm{ind}_{\PP}^{\G}(\sigma_1\otimes~\ldots~\otimes~\sigma_r)$, is isomorphic to the $\C$-algebra $\otimes_{i=1}^{r} \C[\tilde{W}_{n_i}]$.\\

Finally, we show that any such algebra $\otimes_{i=1}^{r} \C[\tilde{W}_{n_i}]$ occurs countably infinitely many times in the Bernstein decomposition. For this, we use the classification of Remark \ref{classificationGl2}. The reader may compare this to the explicit description of the intertwining algebras in \cite{secherre}, Example 3.13.

\begin{itemize}
\item[(1):] A \emph{supercuspidal representation} $(\pi,V)$ corresponds to an inertial class $s = [\G,\rho]_{\G}$ where $\rho$ is itself an irreducible supercuspidal representation. The corresponding intertwining algebra is $\mathcal{H}(\G,\rho) \cong \mathcal{H}(1,q) \cong \C[T,T^{-1}]$, for $q$ some power of $p$. The uncountably infinitely many equivalence classes of supercuspidal representations are indexed by characters of quadratic extensions of $K$.
%When $p \neq 2$, The Tame Parametrization Theorem (20.2) of \cite{bushen} shows that such equivalence classes are in bijection with isomorphism classes of pairs $(F/K,\chi)$ where $F/K$ is a tamely ramified quadratic field extension and $\chi$ is a character of $F^{\times}$. There are countably infinitely many such isomorphism classes. When $p=2$, this only a bijection to the unramified equivalence classes; there are now also so-called primitive representations to be accounted for. However, the total number of inertial equivalence classes of supercuspidal representations is clearly still countably infinite when $p=2$.
\item[(2a):] The principal series representations are constituents of representations of the form $\mathrm{ind}_{\B}^{\G}(\chi_1,\chi_2)$ for a choice of Borel subgroup $\B$ of $\G$ and characters $\chi_1$ and $\chi_2$. Therefore, up to inertial equivalence, we find $\rho(\chi_1\vert\cdot\vert^{z},\chi_2\vert\cdot\vert^{z'}) = \mathrm{ind}_{B}^{G}(\chi_1\vert\cdot\vert^{z},\chi_2\vert\cdot\vert^{z'})$ for some characters $\chi_1$ and $\chi_2$, and some values $z$, $z'$. 

\emph{Non-special representations} then correspond to a choice of $\chi_1$, $\chi_2$ such that $\chi_1 \chi_2^{-1} \neq \vert\cdot\vert^{\pm 1}$ (i.e. $\chi_1$ and $\chi_2$ are not inertially equivalent), or a choice of $\chi$, $z$, $z'$ such that $\vert z - z' \vert \neq 1$. The corresponding inertial class is $s = [\T,\rho]_{\G}$, where $\T$ is a maximal torus in $\B$. For such $\rho$, we have $\mathcal{H}(\G,\rho) \cong \mathcal{H}(1,q) \otimes \mathcal{H}(1,q') \cong \C[X,X^{-1},Y,Y^{-1}]$, for $q$ and $q'$ some powers of $p$. 

We also see that these representations are indexed by the characters of $(\mathcal{O}_K^{\times})^2$ modulo the action of $S_2$, which is a countably infinite group.
\item[(2b/3):] A \emph{special representation} is the infinite-dimensional irreducible subquotient $\mathrm{St}_{\G}\chi\vert\cdot\vert^{z+\frac{1}{2}}$ of the reducible representation $\rho = \rho(\chi\vert\cdot\vert^{z+1},\chi\vert\cdot\vert^{z})$ for some $\chi$ and $z$, and corresponds to $s = [\T,\rho]_{\G}$. The \emph{finite-dimensional representations} appear as the finite-dimensional irreducible subquotients of the same $\rho$.

Hence, the corresponding inertial equivalence classes $s$ are indexed by the character group of $\mathcal{O}_K^{\times}$, which is countably infinite. The corresponding intertwining algebras for both special and finite-dimensional representations are 
\[
\mathcal{H}(2,q) \cong \C[\tilde{W}_2] \cong \C[S,T,T^{-1}]/\langle S^2-1, T^2S-ST^2 \rangle.
\]
\end{itemize}

This finishes the proof of Theorem \ref{B} and hence of Theorem \ref{introB}.
\end{proof}

\section{$L^1$-isomorphisms of local Hecke algebras}

Theorem \ref{B} shows that local Hecke algebras for $\mathrm{GL}_2$ are always Morita equivalent. By contrast, Theorem \ref{A} below implies that the $L^1$-isomorphism type of a local Hecke algebra for any $\mathrm{GL}_n$ ($n \geq 2$) determines the local field up to isomorphism.

\begin{definition}\label{pointgroup}
Let $G = \mathrm{GL}_n$ and let $K$ again be a non-archimedean local field of characteristic zero. Since $K$ is locally compact, $G(K)$ is then a locally compact topological group, whose topology is induced by the topology of $K$. Its group structure is induced by that of $G$. Moreover, it is equipped with a (left) invariant Haar measure $\mu_{G(K)}$ which satisfies $\mu_{G(K)}(G(\mathcal{O}_K)) = 1$. The \emph{group algebra} $L^1(G(K))$ of $G$ over $K$ is the algebra of complex-valued $L^1$-functions with respect to $\mu_{G(K)}$, under the convolution product as in Equation (\ref{conv}).
\end{definition}

\begin{definition}
Let $K$ and $L$ both be non-archimedean local fields. An isomorphism of Hecke algebras $\Psi \colon \HH_G(K) \isomto \HH_G(L)$ which is an isometry for the $L^1$-norms arising from the Haar measures (i.e., which satisfies $||\Psi(f)||_1=||f||_1$ for all $f \in \HH_G(K)$) is called an \emph{$L^1$-isomorphism.}
\end{definition}

\begin{theorem}\label{A}
Let $K$ and $L$ be two non-archimedean local fields of chracteristic zero.
%whose residual characteristics are not equal to $2$. 
Then there is an $L^1$-isomorphism of local Hecke algebras $\HH_G(K) \cong \HH_G(L)$ if and only if there is a field isomorphism $K \cong L$.
\end{theorem}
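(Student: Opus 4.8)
The plan is to prove the two directions separately, the easy one first. If there is a field isomorphism $K \cong L$, it induces a topological group isomorphism $G(K) \cong G(L)$ compatible with the normalisations of the Haar measures (both giving measure $1$ to the maximal compact $G(\mathcal{O})$), and hence an isometric algebra isomorphism $\HH_G(K) \cong \HH_G(L)$; this is essentially functoriality and needs only a remark that the isomorphism of fields carries $\mathcal{O}_K$ to $\mathcal{O}_L$.

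For the converse — the substantive direction — I would proceed in three steps. First, pass from the Hecke algebra to the group algebra: an $L^1$-isomorphism $\Psi \colon \HH_G(K) \isomto \HH_G(L)$ is by definition isometric for the $L^1$-norms, and $\HH_G(K) = C_c^\infty(G(K))$ is dense in $L^1(G(K))$, so $\Psi$ extends uniquely to an isometric algebra isomorphism $L^1(G(K)) \cong L^1(G(L))$ of the group algebras. (One should check that the extension is still multiplicative, which follows from continuity of convolution on $L^1$ and density.) Second, invoke the classical theory of group-algebra isomorphisms: by Wendel's theorem \cite{MR0049910}, an isometric algebra isomorphism between the $L^1$-group algebras of two locally compact groups is implemented by a topological group isomorphism together with a modulus-one character twist; combined with Kawada's results \cite{Kawada} (which pin down when such an isometry forces an honest group isomorphism), one concludes that there is a topological group isomorphism $G(K) \cong G(L)$. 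Third, deduce the field isomorphism: abstract (or topological) group isomorphisms $\mathrm{GL}_n(K) \cong \mathrm{GL}_n(L)$ for $n \geq 2$ are classified by the theory of isomorphisms of classical groups (Borel–Tits, or in the $\mathrm{GL}_n$ case the older results going back to Dieudonné/Schreier–van der Waerden): any such isomorphism is, up to an inner automorphism, a field isomorphism possibly composed with the transpose-inverse/contragredient and a character twist, all of which already require $K \cong L$. Hence $K \cong L$.

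The main obstacle I expect is the passage from the isometric $L^1$-algebra isomorphism to a genuine group isomorphism in the second step: one must be careful that Wendel's characterisation yields not merely a weighted convolution identification but an actual homeomorphic group isomorphism, and that the Haar-measure normalisations are respected so that no nontrivial modulus factor intervenes. In particular, the multiplicativity of $\Psi$ rules out the "character twist" ambiguity being anything other than trivial on the relevant part, and the isometry rules out a scaling of Haar measure; making this precise — and citing Kawada's theorem in exactly the right form — is where the care is needed. The first and third steps are comparatively routine, the first being a density/continuity argument and the third a citation of the structure theory of automorphisms of $\mathrm{GL}_n$ over fields, as already used in \cite{CK}, Corollary 6.4; indeed the whole argument parallels the adelic case treated there.
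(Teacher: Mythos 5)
Your proposal follows essentially the same route as the paper: reduce to an isometric algebra isomorphism $L^1(G(K)) \cong L^1(G(L))$ by a density argument, apply the theorems of Wendel and Kawada to obtain a topological group isomorphism $G(K) \cong G(L)$, and conclude $K \cong L$ from the classical classification of isomorphisms of general linear groups (the paper cites O'Meara, Theorem 5.6.10). The only cosmetic difference is in the first step, where the paper detours through the Stone--Weierstrass theorem before concluding $L^1$-density while you invoke density of $C_c^\infty(G(K))$ in $L^1(G(K))$ directly; your added caution about the character-twist ambiguity in Wendel's theorem is well placed but does not change the argument.
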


\begin{remark} 
In the statement of Theorem \ref{A}, the field isomorphism $K \cong L$ is automatically a topological field isomorphism: an abstract field isomorphism $K \cong L$ will restrict to the multiplicative groups: $K^{\times} \cong L^{\times}$. However, $K^{\times}$ will have an infinite divisible $p$-subgroup if and only if $K$ is an extension of $\Q_p$, by Corollary 53.4 of \cite{classicalfields}. Thus, an abstract isomorphism determines the residue characteristic of $K$ and $L$ uniquely, so they are finite extensions of the same $\Q_p$. The valuation of $\Q_p$ extends uniquely to both $K$ and $L$, so that in particular the valuations on $K$ and $L$ will be equivalent and hence generate the same topology. 
\end{remark}

\begin{proof}
%$\HH_G(K) \cong \HH_G(L) \Rightarrow L^1(G(K)) \cong L^1(G(L)) \Rightarrow G(K) \cong G(L) \Rightarrow K \cong L$, using Stone-Weierstrass/Kawada Wendel/Other literature.
Firstly, we claim that an $L^1$-isomorphism $\HH_G(K) \cong \HH_G(L)$ implies an $L^1$-isomorphism of algebras $L^1(G(K)) \cong L^1(G(L))$. 

Because $G(K)$ is a Hausdorff space, $\mathcal{H}_G(K)$ is point separating. Moreover, the Hecke algebra vanishes nowhere, since it contains the characteristic function of any compact $\mathbf{K} \subseteq G(K)$. By $7.37.\textup{b}$ of \cite{HS}, we can therefore apply the Stone-Weierstrass theorem for locally compact spaces, to conclude that $\HH_G(K)$ is dense inside the algebra $C_0(G,K)$ of functions $f \colon G(K) \to \C$ which vanish at infinity (meaning that $|f(x)|< \varepsilon$ outside a compact subset of $G(K)$), under the sup-norm.

A fortiori, $\HH_G(K)$ is dense, again under the sup-norm, inside the algebra $C_c(G,K)$ of compactly supported functions $G(K) \to \C$, and hence it is also dense under the $L^1$-norm. Now $C_c(G,K)$ is dense in $L^1(G(K))$, proving the claim.

Secondly, by results due to Wendel \cite{MR0049910} and Kawada \cite{Kawada}, an $L^1$-isometry of group algebras of locally compact topological groups is always induced by an isomorphism of the topological groups. Therefore, an $L^1$-isometry $L^1(G(K)) \cong L^1(G(L))$ implies a group isomorphism $G(K) \cong G(L)$.

Finally, the fact that $G(K) \cong G(L)$ implies that $K \cong L$ is a classical result, cf.\ Theorem 5.6.10 of \cite{Omeara}.
% See whether Dieudonne is a better/earlier reference for this!
\end{proof}

\begin{remark}
This theorem can be viewed as a local and complex version of Theorem 6.3 ("Theorem E") of \cite{CK}, which deals with the adelic and real analogue (but also holds over $\C$). The proof above is analogous to that of Theorem 6.3. In the last step, instead of citing the literature, we could use Theorem G of \cite{CK}, since the local version of this also implies a field isomorphism $K \cong L$.
\end{remark}

\section{Discussion}

The results in this paper naturally inspire some further questions.

\subsection{Generalisations of Theorem \ref{B}}
\begin{enumerate}
\item We have seen that for $\mathrm{GL}_2$, up to Morita equivalence, $\mathcal{H}_G(K)$ does not depend on $K$. Does the same hold up to algebra isomorphism?
\item An extension of the proof of Theorem \ref{B} to $\mathrm{GL}_n$, $n > 2$, is obstructed by the braid relations ($(4)$ of Definition \ref{IH}) among the generators of the affine Heceke algebras. This is pointed out by Xi in (11.7) of \cite{xi}, where he proves that $\mathcal{H}(3,q) \not\equiv \C[\tilde{W}_3]$ for $q \neq 1$. In fact, Yan proves in \cite{yan} that any two affine Hecke algebras $\mathcal{H}(n,q)$ and $\mathcal{H}(n,q')$ of type $\tilde{A}_2$ are not Morita equivalent when $q \neq q'$.
\item One may still ask whether Theorem \ref{B} also holds for other reductive groups $G$ over $K$. It is known that the Hecke algebras of such groups also admit a Bernstein decomposition. However, it is in general much harder to determine the complex algebras that occur as intertwining algebras and to show that these are independent of $K$. We would also want to have a similar classification of the representation theory of such $G$.
\end{enumerate}

\subsection{A global version of Theorem \ref{B}}\label{global}
In the proof of Theorem \ref{B}, we have seen that the residual characteristic $p$ of $K$ does not play a special role. Hence, 
If $K$ is a number field and $G = \mathrm{GL}_n$, $n \geq 2$, we may consider the (adelic) Hecke algebra $\mathcal{H}_G(K)$ as a restricted tensor product of local Hecke algebras $\mathcal{H}_G(K_v)$, with respect to the maximal open compact subgroups $G(\mathcal{O}_v)$:
\[
\mathcal{H}_G(K) = \otimes_{v} \mathcal{H}_{G}(K_v), 
\]
cf. Chapter 9 of \cite{JL} for $G = \mathrm{GL}_2$ and p.320 of \cite{bump0} for $G = \mathrm{GL}_n$. We know that the module category of any $\mathcal{H}_{\mathrm{GL}_2}(K_v)$ is independent of $K_v$ (so in particular independent of the residual characteristic of $K_v$). Hence, a natural question would be to ask whether the module category of $\mathcal{H}_{\mathrm{GL}_2}(K)$ is also independent of $K$. We expect however that the restricted tensor product construction, through the rings of integers $\mathcal{O}_v$, \emph{does} depend on $K$.

\subsection{Local anabelian questions}\label{anab}
Our main goal was to investigate to what extent $\mathcal{H}_G(K)$ for $G = \mathrm{GL}_n$ determines the field $K$.
By the philosophy of the Langlands program, our question roughly translates to asking which representations of the absolute Galois group $G_K$ determine $K$. It therefore fits in a local anabelian context. 

Neukirch and Uchida proved that the absolute Galois group of a number field determines the number field. As Yamagata points out in \cite{yama}, the analogous statement for non-archimedean local fields of characteristic zero is false. However, Jarden and Ritter \cite{jarrit} prove that in this case, the absolute Galois group $G_K$ determines the absolute field degree $[K \colon \Q_p]$ and the maximal abelian subextension of $K$ over $\Q_p$. In addition, Mochizuki \cite{Mochi} and Abrashkin \cite{abrashkin} prove that the absolute Galois group together with its ramification filtration \emph{does} determine a local field of characteristic $0$ resp. $p>0$. We may therefore ask exactly which field invariants of $K$ are determined by $\mathcal{H}_G(K)$.

%Equation (\ref{con}) then yields
%\[
%\mathcal{H}_G(K) \sim_M \otimes_{v} \left(\left(\bigoplus_{\N} \C[T,T^{-1}] \right) \oplus \left( \bigoplus_{\N} \C[X,X^{-1},Y,Y^{-1}] \right)\oplus \left( \bigoplus_{\N} \C[\tilde{W}] \right) \right)
%\]
%for any number field.

\subsection{The $L^1$-isomorphism condition in Theorem \ref{A}}
The condition that the isomorphism $\mathcal{H}_G(K) \cong \mathcal{H}_G(L)$ is an isometry for the $L^1$-norm is one which we would like to understand from a categorial viewpoint. Does the $L^1$-isomorphism type of (modules over) a Hecke algebra impose analytic conditions on (certain classes of) the automorphic representations? Or, more in the spirit of Section \ref{anab}, can we relate the $L^1$-isomorphism type of a Hecke algebra $\mathcal{H}_G(K)$ to the ramification filtration of the absolute Galois group $G_K$?

\newpage

\bibliographystyle{amsplain}

\end{document}